\theoremstyle{plain} \numberwithin{equation}{section}
\newtheorem{thm}{Theorem}[section]
\newtheorem{theorem}[thm]{Theorem}
\newtheorem{lemma}[thm]{Lemma}
\begin{document}

\fancyhead{}
\renewcommand{\headrulewidth}{0pt}

\setcounter{page}{1}

\title[Repdigits in Euler functions of Pell and associated pell numbers]{Repdigits in Euler functions of Pell and associated pell numbers}
\author{M.K. Sahukar}
\address{Department of Mathematics\\
                National Institute of Technology\\
                Rourkela,orissa\\
                India}
\email{manasi.sahukar@gmail.com}

\author{G.K. Panda}
\address{Department of Mathematics\\
               National Institute of Technology\\
                Rourkela,orissa\\
                India}
\email{gkpanda\_nit@rediffmail.com}

\begin{abstract}
 A natural number $n$ is called a repdigit if all its digits are same. In this paper, we prove that Euler totient function of no Pell number is a repdigit with at least two digits. This study is also extended to certain subclass of associated Pell numbers. 
\end{abstract}

\maketitle
\textbf{Key words:} Pell numbers, Associated Pell numbers, Euler totient function, Repdigits.\\
\maketitle 
\textbf{2010 Subject classification [A.M.S.]:} 11A25, 11D61, 11B39.

\section{introduction}
The Pell sequence $\{P_n\}_{n\geq0}$ and the associated Pell sequence $\{Q_n\}_{n\geq0}$ are defined by the binary recurrences 
\begin{align*}
P_{n+1}=2P_{n}+P_{n-1}~~and~~Q_{n+1}=2Q_{n}+Q_{n-1},
\end{align*} 
with the initial terms $P_0=0,P_1=1$ and $Q_0=1,Q_1=1$ respectively. If $\alpha=1+\sqrt{2}$ and $\beta=1-\sqrt{2}$, then $P_n=\frac{\alpha^n-\beta^n}{\alpha-\beta}$ and $Q_n=\frac{\alpha^n+\beta^n}{2}$ for all $n\geq0$. The Euler totient function $\phi(n)$ of a positive integer $n$ is the number of positive integers less than or equal to $n$ and relatively prime to $n$. If $n$ has the canonical decomposition $n=p_1^{a_1}\cdots p_r^{a_r}$, then it is well-known that
$$\phi(n) = p_1^{a_1-1}(p_1-1)\cdots p_r^{a_r-1}(p_r-1).$$ 
In \cite{Luca 2014}, it was shown that if the Euler function of the $n^{th}$ Pell
number $P_n$ or associated Pell number $Q_n$ is a power of $2$ then $n\leq 8$. In 2014, Damir and Faye\cite{Damir 2014} proved that  if $\{u_n\}_{n\geq0}$ is the Lucas sequence defined by $u_0=0$, $u_1=1$ and $u_{n+2}=ru_{n+1}+su_n$ for all $n \leq 0$ with $s\in \{1,-1\}$, then there are finitely many $n$ such that $\phi(|u_n|)$ is a power of 2.

Recall that a positive integer is called a repdigit if it has only one distinct digit in its decimal expansion. In particular, such numbers are of the form $d(10^m-1)/9$ for some $m\geq1$ and $1\leq d \leq9$. In \cite{Faye 2015}, it was shown that there is no repdigit Pell or Pell-Lucas number larger than 10.

The study of repdigits in Euler functions of specified number sequences has attracted several number theorists. In 2002, Luca \cite{Luca 2002} proved that there exist only finitely many positive integer solution $(m,n)$ satisfying the Diophantine equation $\phi(U_n)=V_m$ where $\{U_n\}_{n\geq0}$ and $\{V_m\}_{m\geq0}$ are two non-degenerate binary recurrence sequences. Taking $V_m=d\cdot\frac{10^m-1}{9}$ where $d\in\{1,2,\cdots, 9\}$, Luca\cite{Bravo 2016,Luca 2006}  investigated the presence of repdigits associated with the Euler functions of Fibonacci and  Lucas numbers. In this paper, we follow the method described in \cite{Bravo 2016,Luca 2006} to investigate the presence of repdigits with atleast two digits in the Euler functions of Pell  and associated Pell numbers.  

Through out this paper, we use $p$ with or without subscripts as a prime number, $\big(\frac{a}{b}\big)$ as Legendre symbol of $a$ and $b$ and $(a,b)$ as the greatest common divisor of $a$ and $b$.\\
\section{Preliminaries}
To achieve the objective of this paper, we need the following results and definitions. We shall keep on referring this section with or without further reference.

\begin{lemma}\label{2.1}\
If $m$ and $n$ be natural numbers, then
\begin{enumerate}
\item $P_{2n}=2P_nQ_n,$\\
\item $Q_n^2-2P_n^2=(-1)^n$,\\
\item $(P_n,Q_n)=1$,\\
\item $P_m|P_n$ if and only if $m|n$,\\
\item $Q_m|Q_n$ if and only if $m|n$ and $\frac{n}{m}$ is odd,\\
\item $v_2(P_n)=v_2(n)$ and $v_2(Q_n)=0$ where $v_2(n)$ is the exponent of $2$ in the canonical decomposition  of $n$,\\
\item $3|Q_n$ if and only if $n\equiv2(\bmod~4)$,\\
\item $5\nmid Q_n$ for any $n$,\\
\item $Q_{3\cdot 2^t}=Q_{2^t}(4Q_{2^t}^2-3)$.

\end{enumerate}
\end{lemma}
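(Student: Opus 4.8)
Every assertion is an identity or a divisibility statement about $P_n,Q_n$, so the workhorse throughout is the pair of Binet formulas together with the two numerical facts $\alpha\beta=-1$ and $(\alpha-\beta)^2=8$. Items (1)--(3) fall out at once: factoring $\alpha^{2n}-\beta^{2n}=(\alpha^n-\beta^n)(\alpha^n+\beta^n)$ gives $P_{2n}=P_n\cdot 2Q_n$; squaring the Binet expressions, dividing by $4$ and $8$ respectively and subtracting gives $Q_n^2-2P_n^2=(\alpha\beta)^n=(-1)^n$; and then any common divisor of $P_n$ and $Q_n$ divides $Q_n^2-2P_n^2=\pm1$, which is (3). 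Along the way I would also record, straight from Binet, the addition formulas $P_{m+n}=P_mQ_n+P_nQ_m$ and $Q_{m+n}=Q_mQ_n+2P_mP_n$ and the doubling formula $Q_{2n}=2Q_n^2-(-1)^n$, since these feed into (4), (5) and (9).

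For the ``if'' direction of (4) one inducts on $k$ in $P_{m(k+1)}=P_{mk}Q_m+P_mQ_{mk}$, which is divisible by $P_m$ once $P_m\mid P_{mk}$. For the ``if'' direction of (5), with $n/m$ odd, write $n=(2j+1)m$ and induct on $j$ using $Q_{(2j+3)m}=Q_{(2j+1)m}Q_{2m}+2P_{(2j+1)m}P_{2m}$ together with $P_{2m}=2P_mQ_m$, so that $Q_m$ divides both summands. The reverse implications — $P_m\mid P_n\Rightarrow m\mid n$, and $Q_m\mid Q_n\Rightarrow m\mid n$ with $n/m$ odd — are the only parts not disposed of by a one-line computation, and this is where I expect the main difficulty: I would invoke the rank of apparition of a prime (the least index at which the prime divides a $P$- or $Q$-term) and the attendant divisibility laws for Lucas and companion-Lucas sequences — equivalently $\gcd(P_m,P_n)=P_{\gcd(m,n)}$ plus the corresponding statement for the $Q$'s — or simply cite these classical facts.

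The remaining items are finite verifications. In (6), $Q_n$ is odd since $Q_n^2=2P_n^2\pm1$ by (2); and $P_n\bmod 2$ is purely periodic with period $2$ (pattern $0,1$), so $2\mid P_n\iff 2\mid n$, after which $P_{2n}=2P_nQ_n$ with $Q_n$ odd gives $v_2(P_{2n})=1+v_2(P_n)$ and hence $v_2(P_n)=v_2(n)$ by induction. For (7) and (8) one lists a single period of $Q_n$ modulo $3$ (period $8$, with $Q_n\equiv 0$ exactly at $n\equiv 2,6\pmod 8$, i.e. $n\equiv 2\pmod 4$) and modulo $5$ (period $12$, never $0$); item (8) can alternatively be read off from $\big(\frac{2}{5}\big)=-1$ and a short computation in $\mathbb{F}_{25}$, but the period check is shorter. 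Finally (9) is the algebraic identity $x^3+y^3=(x+y)^3-3xy(x+y)$ applied to $x=\alpha^{2^t}$, $y=\beta^{2^t}$: here $xy=(\alpha\beta)^{2^t}=1$ for $t\ge 1$, so dividing by $2$ yields $Q_{3\cdot 2^t}=Q_{2^t}\big((2Q_{2^t})^2-3\big)=Q_{2^t}(4Q_{2^t}^2-3)$ — equivalently it is the specialization to the even index $m=2^t$ of the general formula $Q_{3m}=Q_m(4Q_m^2-3(-1)^m)$.
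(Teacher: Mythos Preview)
Your proof plan is correct and complete. The paper, however, does not prove Lemma~\ref{2.1} at all: it simply refers the reader to Koshy's book \cite{koshy}. So your approach is not so much \emph{different} from the paper's as it is an actual proof where the paper offers only a citation. Your use of the Binet formulas and the addition identities $P_{m+n}=P_mQ_n+P_nQ_m$, $Q_{m+n}=Q_mQ_n+2P_mP_n$ is exactly the standard route Koshy takes, and the periodicity checks for (6)--(8) together with the factorization $x^3+y^3=(x+y)^3-3xy(x+y)$ for (9) are clean and self-contained. You are also right to flag the converse directions in (4) and (5) as the only places where a nontrivial classical input (rank of apparition, or the $\gcd$ formula for Lucas sequences) is needed; citing that is entirely in the spirit of the paper. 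One small remark: your observation that (9) as stated requires $t\ge 1$ (since $(\alpha\beta)^{2^t}=1$ only then) is correct, and your general form $Q_{3m}=Q_m(4Q_m^2-3(-1)^m)$ is the right way to say it; the paper only ever applies (9) with $t\ge 1$, so this does not affect anything downstream.
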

For the proof of this lemma the readers are advised to refer to \cite{koshy}.

\begin{lemma}\label{2.2}{\rm{(\cite{Petho 1992},Theorem 2,\cite{Cohn 1996}, Theorem 1)}}.
The only solutions of Diophantine equation $P_n=y^m$ in positive integers $n,y,m$ with $m\geq2$ are $(n,y,m)= (1,1,m)$, $(7,13,2)$.
\end{lemma}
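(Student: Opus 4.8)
The plan is to reduce to the case where the exponent is prime --- any solution with composite $m$ gives one with $m$ replaced by a prime divisor of it --- and then to treat $m=2$ and $m=q$ an odd prime separately. In each case one first notes that $n=1$ produces $P_1=1=1^m$, i.e.\ the family $(1,1,m)$, and then shows that any other solution must have $n$ odd. Indeed, if $n=2k$, Lemma~\ref{2.1}(1) gives $P_n=2P_kQ_k$, while parts (3) and (6) say that $Q_k$ is odd and coprime to $P_k$, so $2P_k$ and $Q_k$ are coprime; a coprime factorisation of a perfect $m$-th power forces each factor to be a perfect $m$-th power, whence $Q_k$ and $2P_k$ are both perfect $m$-th powers. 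Granting the companion fact that the only perfect powers among the associated Pell numbers are $Q_0=Q_1=1$, this leaves $k\in\{0,1\}$, and since $2P_1=2$ is not a perfect power the even case yields only the excluded value $n=0$.

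For $n$ odd and $m=2$, Lemma~\ref{2.1}(2) gives $Q_n^2-2P_n^2=-1$, so setting $P_n=y^2$ turns the problem into
$$Q_n^2-2y^4=-1,$$
which is exactly Ljunggren's equation $X^2-2Y^4=-1$. Its only solutions in positive integers are $(X,Y)=(1,1)$ and $(239,13)$; since $Q_1=1$, $Q_7=239$ and $\{Q_n\}$ is strictly increasing, this gives precisely $n\in\{1,7\}$, i.e.\ the solution $(7,13,2)$ (besides the trivial $n=1$).

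The remaining case --- $n$ odd and $m=q$ an odd prime --- is where the genuine work lies, since no single identity converts $P_n=y^q$ into a curve of small genus. After clearing small $n$ by direct inspection, I would handle large $n$ by one of two routes. The first uses the primitive divisor theorem of Bilu--Hanrot--Voutier: for $n$ beyond an explicit small bound the Lucas number $P_n$ (with roots $\alpha,\beta$) has a primitive prime divisor $\pi$, and a lifting-the-exponent analysis of $v_\pi(P_n)$ shows this valuation is too small to be divisible by $q$ unless $n$ is small --- a contradiction. The second, the classical route of Peth\H{o}, is Baker's theory of linear forms in logarithms applied to $2\sqrt2\,y^q=\alpha^n-\beta^n$ with $|\beta|<1$: one gets a linear form in two logarithms that is exponentially small in $n$ yet bounded below, forcing effective bounds on $n$ and $q$, after which the finitely many surviving pairs $(n,q)$ are eliminated by a finite computation. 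Either way, no solution with $q\ge 3$ appears beyond $n=1$.

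The hard part is twofold. First, the odd-index square case genuinely rests on the (deep, classical) resolution of Ljunggren's equation $X^2-2Y^4=-1$, and the companion statement on perfect powers of $\{Q_n\}$ used in the even-index reduction similarly rests on Ljunggren-type equations such as $X^4-2Y^2=\pm1$; these are not elementary. Second, and more serious, is the odd-prime-exponent case: it cannot be reduced to a low-genus curve, so one is forced to invoke either the full strength of the primitive divisor theorem together with a careful $q$-adic valuation argument, or effective transcendence measures followed by a non-trivial finite search. Everything else --- the reduction to prime exponents and the splitting off of the even index --- is routine manipulation with the identities of Lemma~\ref{2.1}.
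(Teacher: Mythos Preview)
The paper does not supply a proof of this lemma at all: it is quoted verbatim from Peth\H{o} and Cohn, so there is nothing in the paper to compare your argument against. What you have written is a fair outline of how those original proofs go, and your reduction to odd index via $P_{2k}=2P_kQ_k$ together with Lemma~\ref{2.1}(3),(6) is exactly the standard opening move; likewise, the odd-index square case really does collapse to Ljunggren's equation $X^2-2Y^4=-1$, and your identification of $(Q_7,P_7)=(239,169)$ with the nontrivial Ljunggren solution is correct.

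Two cautions. First, in the even-index step you invoke the companion result on perfect powers in $\{Q_n\}$ (the paper's Lemma~\ref{2.4}); be aware that in the literature these two statements are often proved in tandem rather than one from the other, so if you intend a self-contained argument you should not lean on Lemma~\ref{2.4} as a black box. Second, and more substantively, your ``first route'' for the odd-prime-exponent case does not work as described: the Bilu--Hanrot--Voutier theorem guarantees a primitive prime $\pi\mid P_n$, but neither that theorem nor any lifting-the-exponent identity bounds $v_\pi(P_n)$ from above for the primitive prime itself---LTE controls $v_\pi(P_{kn})-v_\pi(P_n)$, not $v_\pi(P_n)$. Ruling out $\pi^q\mid P_n$ unconditionally would amount to a Wall--Sun--Sun--type statement, which is open even for Fibonacci numbers. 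The argument that actually closes this case is your second route, Baker's method applied to the linear form coming from $2\sqrt{2}\,y^q=\alpha^n-\beta^n$, followed by a finite search; that is precisely what Peth\H{o} (and independently Cohn, by a different descent for $q=3$ and then general $q$) carried out.
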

\begin{lemma}\label{2.3}{\rm{(\cite{Daniel 1998},Theorem 1)}}. 
The solutions of the Diophantine equation $P_mP_n=x^2$ with $1\leq m<n$ are $(m,n)=(1,7)$ or $n=3m$, $3\nmid m$, $m$ is odd.
\end{lemma}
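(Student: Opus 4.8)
\emph{Sketch of proof (following \cite{Daniel 1998}).}
The plan is to use the standard strategy for square products in Lucas-type sequences: reduce modulo the greatest common divisor, split off a perfect-square factor, and then analyse the surviving ``Lucas cofactors'' with the help of Lemma~\ref{2.2} and a primitive-divisor argument. First I would record that $\{P_n\}_{n\ge 0}$ is a strong divisibility sequence, i.e. $(P_m,P_n)=P_{(m,n)}$; this follows from parts (3)--(4) of Lemma~\ref{2.1} together with the addition identity $P_{s+t}=P_sP_{t+1}+P_{s-1}P_t$ by the usual Euclidean argument. Setting $d=(m,n)$ and $m=da$, $n=db$ with $(a,b)=1$ and $1\le a<b$, one has $P_mP_n=P_d^{\,2}\cdot\dfrac{P_m}{P_d}\cdot\dfrac{P_n}{P_d}$ and $\bigl(P_m/P_d,\,P_n/P_d\bigr)=(P_m,P_n)/P_d=1$. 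Hence $P_mP_n=x^2$ forces each of $P_m/P_d$ and $P_n/P_d$ to be a perfect square, and the problem reduces to: for which integers $k>1$ can $P_{dk}/P_d$ be a perfect square?

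Next I would dispose of the small ratios. By Lemma~\ref{2.1}(1), $P_{2k}/P_k=2Q_k$, which is twice an odd number (Lemma~\ref{2.1}(6)) and hence never a square; iterating this handles all even ratios. From $\alpha^{3k}-\beta^{3k}=(\alpha^k-\beta^k)\bigl((\alpha^k+\beta^k)^2-\alpha^k\beta^k\bigr)$ together with Lemma~\ref{2.1}(2) one gets $P_{3k}/P_k=4Q_k^2-(-1)^k$, and since $4Q_k^2-(-1)^k\equiv 3(-1)^k\pmod{P_k}$ we have $\bigl(P_k,\,4Q_k^2-(-1)^k\bigr)=(P_k,3)$; tracking this gcd and the parity of $k$ against the requirement that both cofactors be squares is exactly what pins down the arithmetic conditions on $m$ in the statement and produces the family $n=3m$. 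For an admissible ratio $k\ge 4$ one expands $P_{dk}/P_d$ into its cyclotomic-type factors; by Carmichael's primitive-divisor theorem for Pell numbers these factors are pairwise coprime outside a small controlled set of primes, so each must itself be a square or twice a square, whereupon Lemma~\ref{2.2} (the only squares among $P_n$, $n\ge 1$, being $P_1=1$ and $P_7=13^2$) eliminates all of them except the configuration $d=m=1$, $k=7$, which is the sporadic solution $(m,n)=(1,7)$ coming from $P_1P_7=13^2$. Assembling the cases gives the lemma.

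I expect the genuine difficulty to lie in the small-ratio analysis — the ratio-$3$ case and the first few admissible odd ratios — where the primitive-divisor input still leaves a composite cofactor that has to be handled by an explicit Cohn-type congruence argument or by a descent on an associated Pell equation, and where the bookkeeping on $v_2(\cdot)$ and on divisibility by $3$ must be reconciled with the family $n=3m$ in the statement. The input from Lemma~\ref{2.2} is essential: it is precisely the square $P_7=169$ that is responsible for the exceptional pair $(1,7)$.
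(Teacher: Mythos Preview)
The paper does not give its own proof of this lemma: it is quoted verbatim as Theorem~1 of \cite{Daniel 1998} and used as a black box, so there is no in-paper argument to compare your sketch against. Your outline is broadly in the spirit of that reference --- exploit strong divisibility to factor $P_mP_n=P_d^{\,2}\cdot(P_m/P_d)(P_n/P_d)$ with coprime cofactors, then classify when $P_{dk}/P_d$ is a square using primitive divisors and the known squares among Pell numbers --- and the identification of $P_7=169$ as the source of the sporadic pair $(1,7)$ is correct.

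That said, as a self-contained proof your sketch has real gaps precisely where you flag them. The step ``iterating this handles all even ratios'' is not quite right as stated: $P_{2dk}/P_d$ is not simply $2Q_{dk}$ times a square factor, so you need a sharper decomposition (via the Lehmer/cyclotomic factorisation $P_N/P_d=\prod_{d\mid e\mid N,\,e\neq d}\Phi_e(\alpha,\beta)$) before the $2$-adic argument bites. Likewise the ratio-$3$ case, which is where the family $n=3m$, $3\nmid m$, $m$ odd actually emerges, needs the full congruence analysis of $4Q_k^2-(-1)^k$ being a square together with the $(P_k,3)$ bookkeeping; you have set this up but not carried it out. For the purposes of the present paper none of this matters --- the lemma is imported wholesale --- but if you intend your sketch to stand on its own you would need to fill in exactly those two places.
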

\begin{lemma}\label{2.4}{\rm{(\cite{Bravo 2015}, Theorem A)}}.
 If $n,y,m$ are positive integers with $m\geq2$, then the equation $Q_n=y^m$ has the only solution $(n,y)=(1,1)$.
\end{lemma}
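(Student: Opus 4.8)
The plan is to peel off the trivial solution, reduce the exponent $m$, and then treat square and odd-prime exponents separately, letting the Pell relation $Q_n^2-2P_n^2=(-1)^n$ of Lemma~\ref{2.1} do the real work: substituting $Q_n=y^m$ into it turns the content-free statement ``$Q_n$ is a perfect power'' into a genuine Diophantine equation. Since $Q_0=Q_1=1$ while $Q_n\ge Q_2=3$ for $n\ge 2$, any solution with $y=1$ has $n=1$; so assume $y\ge 2$, and since $Q_n$ is odd (Lemma~\ref{2.1}(6)) in fact $y\ge 3$. As every $m\ge 2$ is either a power of $2$ or has an odd prime factor $p$, and then $Q_n=(y^{m/2})^2$ respectively $Q_n=(y^{m/p})^p$, it suffices to show that for $n\ge 2$ the number $Q_n$ is neither a perfect square nor a perfect $p$-th power for any odd prime $p$.

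Suppose first $m=2$, so $Q_n=y^2$. If $n=2k$, the Binet formulas give $Q_{2k}=4P_k^2+(-1)^k$, so for $k$ odd $y^2=(2P_k-1)(2P_k+1)$ is a product of coprime factors differing by $2$---not both squares, impossible---and for $k$ even $y^2-(2P_k)^2=1$ forces $P_k=0$, hence $k=0$. If $n$ is odd, then $Q_n\equiv 7\pmod 8$ when $n\equiv 3\pmod 4$ (a non-residue), so $n\equiv 1\pmod 4$; substituting $Q_n=y^2$ into the Pell relation gives $2P_n^2=y^4+1$, and I would finish either by a Cohn-style $2$-adic descent on the recurrence or by working in $\Z[\sqrt 2]$, where $y^4+1=(y^2+\sqrt 2\,y+1)(y^2-\sqrt 2\,y+1)$ factors into conjugate elements whose common part is a bounded power of $\sqrt 2$, a unit descent then forcing $y=1$ and so $n=1$. (Alternatively one may invoke the known determination of the perfect squares among the associated Pell numbers.)

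Now suppose $m=p$ is an odd prime and $n\ge 2$, and put $X=y^2\ (\ge 9)$. Substituting $Q_n=y^p$ into the Pell relation gives $2P_n^2=X^p-(-1)^n=\bigl(X-(-1)^n\bigr)B$ with $B=\frac{X^p-(-1)^n}{X-(-1)^n}$. One checks that $\gcd(X-(-1)^n,B)$ divides $p$ (as $B\equiv p$ modulo $X-(-1)^n$), that $B$ is odd, and that, $y$ being odd, $v_2(X-(-1)^n)$ is $1$ for $n$ odd and $\ge 3$ for $n$ even. Comparing $2$-adic and $p$-adic valuations in $2P_n^2=(X-(-1)^n)B$ then forces $B$ (if the gcd is $1$) or $B/p$ (if it is $p$) to be a perfect square, and simultaneously $X-(-1)^n$ to be $2z^2$ or $2pz^2$. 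In the gcd-$1$ cases $X-(-1)^n=2z^2$ reads $y^2-2z^2=(-1)^n$, so $y$ is itself an associated Pell number $Q_{n'}$ with $1\le n'<n$ and hence $Q_n=Q_{n'}^p$; but then every prime factor of $Q_n$ divides $Q_{n'}$, so $Q_n$ has no primitive prime divisor, which the primitive divisor theorem (Bilu--Hanrot--Voutier) forbids for all $n$ above a small explicit bound---the remaining cases being checked directly. In the gcd-$p$ cases one is left with $B/p=z^2$, a $p$-twisted Nagell--Ljunggren equation, together with the twisted condition $y^2-2pz^2=(-1)^n$ on the other factor; these must be dispatched either by a similar twisted analysis or by quoting the appropriate resolution of Nagell--Ljunggren-type equations, Ljunggren's theorem handling the untwisted shapes $\frac{X^p\pm1}{X\pm1}=z^2$ (which have no solution once $X>1$ is a perfect square).

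I expect the odd-prime case---concretely, its gcd-$p$ subcase---to be the main obstacle: settling the $p$-twisted equations $\frac{X^p\pm1}{X\pm1}=pz^2$ for every odd prime $p$ is delicate, and the naive alternative of bounding the parameters through linear forms in logarithms is blocked here because the logarithmic height of $y$ grows like $n/p$, so controlling $p$ requires a substantive divisibility argument about $Q_n$ playing the role that the modular method plays in the analogous Fibonacci perfect-power theorem. The $\Z[\sqrt 2]$-descent for the odd-index square case is the other step that calls for genuine input rather than bookkeeping.
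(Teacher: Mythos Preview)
The paper does not prove Lemma~\ref{2.4} at all: it is quoted verbatim as Theorem~A of \cite{Bravo 2015} and used as a black box throughout. So there is no ``paper's own proof'' to compare against; the relevant comparison is to the argument in \cite{Bravo 2015}, which treats perfect powers in products of Pell and Pell--Lucas terms via linear forms in logarithms together with the modular method, not by the elementary descent you sketch.

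Your outline is a reasonable plan but, as you yourself flag, it is not a proof. Two places are genuinely incomplete. First, in the odd-$n$ square case you reduce to $2P_n^2=y^4+1$ and then only gesture at a $\Z[\sqrt 2]$ descent or ``a Cohn-style $2$-adic descent''; neither is carried out, and the unit-descent in $\Z[\sqrt 2]$ is not as routine as you suggest (the conjugate factors need not be coprime up to units in the obvious way, and one must control the unit ambiguity carefully). Second, and more seriously, in the odd-prime-exponent case your analysis bifurcates on $\gcd(X-(-1)^n,B)\in\{1,p\}$, and you explicitly leave the $p$-twisted Nagell--Ljunggren equations $\tfrac{X^p\pm 1}{X\pm 1}=p z^2$ unresolved. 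These are precisely the hard cases; Ljunggren's classical theorem does not cover them, and there is no known uniform elementary argument that disposes of them for all odd primes $p$ simultaneously. This is exactly why the literature (including \cite{Bravo 2015}) resorts to Baker-type bounds and, for small exponents, to modular or Frey-curve techniques. Until you supply an argument for that subcase, the proposal has a real gap rather than just missing details.
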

\begin{lemma}\label{2.5}{\rm{(\cite{Daniel 1998}, Theorem 3)}}.
The solutions of the Diophantine equation $Q_mQ_n=x^2$ with $0\leq m<n$ are $n=3m$, $3\nmid m$, $m$ is odd.
\end{lemma}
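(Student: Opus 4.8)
The plan is to run the classical ``a product of two terms of a Lucas-type sequence is a perfect square'' argument: one combines the divisibility and $\gcd$ structure of $\{Q_n\}$ recorded in Lemma \ref{2.1} with the classification of perfect powers in Lemma \ref{2.4}, and reduces the problem to finitely many polynomial Diophantine equations.

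First I would set $d=\gcd(m,n)$ and write $m=da$, $n=db$ with $\gcd(a,b)=1$ and $a<b$. The key structural input is the value of $\gcd(Q_m,Q_n)$: since every $Q_k$ is odd (Lemma \ref{2.1}(6)) and $Q_d\mid Q_N$ precisely when $d\mid N$ with $N/d$ odd (Lemma \ref{2.1}(5)), one obtains $\gcd(Q_m,Q_n)=Q_d$ when $a$ and $b$ are both odd, and $\gcd(Q_m,Q_n)=1$ when exactly one of $a,b$ is even (these are the only possibilities, as $\gcd(a,b)=1$). The degenerate case $Q_m=1$, i.e.\ $m\le1$, is disposed of at once: then $Q_n=x^{2}$, and Lemma \ref{2.4} forces $n\le1$. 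So assume $1<m<n$. If exactly one of $a,b$ is even, then $\gcd(Q_m,Q_n)=1$, so $Q_mQ_n=x^{2}$ forces $Q_m$ and $Q_n$ to be perfect squares separately, impossible for indices exceeding $1$ by Lemma \ref{2.4}. Hence $a$ and $b$ are both odd; then $Q_d\mid Q_m$, $Q_d\mid Q_n$, and $\gcd(Q_m/Q_d,\,Q_n/Q_d)=1$, so from $Q_mQ_n=Q_d^{2}\cdot(Q_m/Q_d)(Q_n/Q_d)=x^{2}$ we conclude that $Q_m/Q_d$ and $Q_n/Q_d$ are each perfect squares.

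Everything now reduces to the sub-problem: for an odd integer $k>1$, when is $Q_{dk}/Q_d$ a perfect square? For $k=3$ this is governed by the triplication identity $Q_{3j}=Q_j\bigl(4Q_j^{2}-3(-1)^{j}\bigr)$, of which Lemma \ref{2.1}(9) is the case $j=2^{t}$; solving the small equations it produces (``$4Q_j^{2}-3(-1)^{j}$ a perfect square'' and the analogous relations forced on the two indices), together with the congruence information in Lemma \ref{2.1}(7), is exactly what isolates the surviving family and its restrictions that $m$ be odd with $3\nmid m$. For a prime exponent $p\ge5$ one expresses $Q_{dp}/Q_d=\frac{\alpha^{dp}+\beta^{dp}}{\alpha^{d}+\beta^{d}}$ as a polynomial of degree $p-1$ in $Q_d$ with coefficients depending only on the parity of $d$, and an approximate-square-root estimate shows that for $Q_d$ large this value cannot be a perfect square, leaving only a finite check; a composite odd $k$ is reduced to the prime case by iterating the divisibility $Q_d\mid Q_{dk_1}\mid Q_{dk_1k_2}$ and re-applying the $\gcd$ computation. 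This pins $a=1$ (hence $d=m$) and $b=3$, and the converse direction is then settled by a direct computation with the triplication identity.

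The main obstacle is this sub-problem. The approximate-square-root argument for exponents $\ge5$ is routine in principle but must be arranged so as to cut down to a genuinely finite, checkable range; and, more delicately, the low-degree equations coming from the $k=3$ identity have to be solved \emph{completely}, since it is precisely there that both the exceptional family and its exact parity and divisibility-by-$3$ conditions get pinned down. Everything else is bookkeeping around Lemmas \ref{2.1} and \ref{2.4}.
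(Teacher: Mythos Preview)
The paper does not prove Lemma~\ref{2.5} at all; it is quoted without proof from McDaniel and Ribenboim \cite{Daniel 1998} as a known result, so there is no ``paper's own proof'' to compare your proposal against.

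For what it is worth, your plan follows the standard shape of the square-class arguments for Lucas sequences of the second kind (and is close in spirit to what McDaniel and Ribenboim actually do): reduce via the $\gcd$ structure to the question of when the cofactor $Q_{dk}/Q_d$ is a perfect square for odd $k>1$, handle $k=3$ via the triplication identity, and rule out larger prime $k$ by a Runge-type or approximate-square-root estimate. Two remarks. First, Lemma~\ref{2.1} as stated in this paper gives only the divisibility criterion $Q_d\mid Q_n\iff d\mid n$ with $n/d$ odd, not the exact identity $\gcd(Q_m,Q_n)=Q_d$ that you invoke; that stronger $\gcd$ formula is true for the associated Pell sequence but must be imported or proved separately. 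Second, the two pieces you flag as the ``main obstacle''---the complete solution of the small equations arising from $k=3$, and an effective estimate eliminating prime $k\ge5$---are indeed where all the content lies, and you have named them rather than carried them out. So your proposal is a reasonable outline, but it goes well beyond what the present paper attempts, since the paper simply cites the result.
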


\begin{lemma}\label{2.6}
 If $m$ and $n$ are positive integers and $p$ is an odd prime then the Diophantine equation $P_n=4p^m$ has only one integer solution $n=4, p=3, m=1$.
\end{lemma}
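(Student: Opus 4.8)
The plan is to use the $2$-adic information in Lemma~\ref{2.1}(6) to pin down the shape of $n$, then exploit the multiplicative identity $P_{2n}=2P_nQ_n$ to rewrite $P_n$ as a product of three pairwise coprime factors, and finally observe that a prime power cannot be a product of more than one factor exceeding $1$.

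First I would note that if $P_n=4p^m$ with $p$ an odd prime and $m\ge 1$, then $v_2(P_n)=2$, so by Lemma~\ref{2.1}(6) we get $v_2(n)=2$; hence $n=4k$ with $k$ odd. Applying Lemma~\ref{2.1}(1) twice (first with index $2k$, then with index $k$),
\[
P_{4k}=2P_{2k}Q_{2k}=2\bigl(2P_kQ_k\bigr)Q_{2k}=4\,P_kQ_kQ_{2k},
\]
so the equation $P_n=4p^m$ is equivalent to $P_kQ_kQ_{2k}=p^m$.

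Next I would show $P_k$, $Q_k$, $Q_{2k}$ are pairwise coprime. Since $P_{2k}=2P_kQ_k$ is divisible by both $P_k$ and $Q_k$, any common divisor of $P_k$ (resp.\ $Q_k$) and $Q_{2k}$ divides $\gcd(P_{2k},Q_{2k})$, which equals $1$ by Lemma~\ref{2.1}(3); and $\gcd(P_k,Q_k)=1$ directly by Lemma~\ref{2.1}(3). Because $P_kQ_kQ_{2k}=p^m$ is a prime power and the three factors are pairwise coprime, at most one of them can exceed $1$.

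Finally I would rule out $k\ge 3$: for odd $k\ge 3$ we have $P_k\ge P_3=5>1$ and $Q_k\ge Q_3=7>1$, so two of the three coprime factors exceed $1$, contradicting the previous paragraph. Hence $k=1$, i.e.\ $n=4$, and then $P_4=12=4\cdot 3$ forces $p^m=3$, so $p=3$ and $m=1$, which is the claimed unique solution. I do not expect a genuine obstacle here; the only point needing a moment's care is the pairwise coprimality, and even that collapses immediately to the standard fact $\gcd(P_j,Q_j)=1$, so the proposition is essentially a corollary of Lemma~\ref{2.1}.
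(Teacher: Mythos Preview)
Your proof is correct and follows essentially the same strategy as the paper: use the $2$-divisibility of $P_n$ to write $n=4k$, apply the doubling identity $P_{2n}=2P_nQ_n$, and exploit the coprimality of the Pell and associated Pell factors to force the small solution. The only cosmetic difference is that the paper stops after one application of the identity (writing $2P_{2k}Q_{2k}=4p^m$ and concluding $P_{2k}=2$, $Q_{2k}=p^m$ from $(P_{2k},Q_{2k})=1$ and the oddness of $Q_{2k}$), whereas you iterate once more to a three-factor decomposition; your version is in fact more explicitly finished than the paper's, which stops at $P_{2k}=2$ without spelling out $k=1$.
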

\begin{proof}
Suppose that $P_n=4p^m$ where $p$ is a prime and $m$ and $n$ are positive integers. Since $4|P_n$, $n=4k$ for some $k$. Hence $P_n=P_{4k}=2P_{2k}Q_{2k}=4p^m$. Since $(P_{2k}, Q_{2k})=1$ and $Q_n$ is odd for all $n\geq0$, it follows that $P_{2k}=2$ and $Q_{2k}=p^m$. 
\end{proof}

\begin{lemma}{\rm{(\cite{Bravo 2015}, Lemma 2.1)}}.\label{2.7}
Let $(u_n)_{n\geq0}$ be a binary recurrence sequence. If there exists a prime $p$ such that $p|u_n$ and $p\nmid \displaystyle\prod_{i=1}^{n-1}u_i$, then $p$ is called as primitive prime factor of $u_n$ and is always congruent to $\pm1$ modulo $n$.
\end{lemma}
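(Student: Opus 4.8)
The plan is to read the congruence off from the \emph{rank of apparition} of $p$ --- the least positive index at which $p$ divides the sequence in question. Both $\{P_n\}$ and $\{Q_n\}$ satisfy recurrences with characteristic polynomial $x^2-2x-1$, whose roots $\alpha=1+\sqrt2$, $\beta=1-\sqrt2$ satisfy $\alpha\beta=-1$ and $(\alpha-\beta)^2=8$. Fix a prime $p$; since $2\nmid Q_n$ for every $n$ and $2\mid P_n\iff 2\mid n$ by Lemma~\ref{2.1}(6), the case $p=2$ is degenerate and is dealt with directly, so assume $p$ odd and set $\varepsilon=\big(\frac{2}{p}\big)$. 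Let $k$ be the field $\mathbb F_p$ if $\varepsilon=1$ and $\mathbb F_{p^2}$ if $\varepsilon=-1$, let $s\in k$ be a square root of $2$, and regard $\alpha=1+s$ and $\beta=1-s$ as elements of $k$; then $\alpha\beta=-1$, $\alpha-\beta=2s$ is a unit of $k$, and reducing the Binet formulas modulo $p$ (via $\sqrt2\mapsto s$) gives $P_m\equiv(\alpha^m-\beta^m)/(\alpha-\beta)$ and $Q_m\equiv(\alpha^m+\beta^m)/2$ in $k$. Hence, with $t:=\alpha/\beta\in k^\times$, we have $p\mid P_m\iff t^m=1$ and $p\mid Q_m\iff t^m=-1$ in $k$. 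Writing $f$ for the multiplicative order of $t$, the rank of apparition of $p$ in $\{P_n\}$ equals $f$; and $p$ divides some $Q_m$ only if $f$ is even, in which case $t^{f/2}=-1$ (the sole element of order $2$ in the field $k$) and the rank of apparition of $p$ in $\{Q_n\}$ equals $f/2$.

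The crux is a Fermat-type identity for $\alpha$. If $\varepsilon=1$ then $\alpha\in\mathbb F_p^\times$, so $\alpha^{p-1}=1$. If $\varepsilon=-1$, the Frobenius $x\mapsto x^p$ of $k=\mathbb F_{p^2}$ fixes $\mathbb F_p$ and sends $s\mapsto s^p=2^{(p-1)/2}s=\varepsilon s=-s$ (Euler's criterion), hence $\alpha=1+s\mapsto1-s=\beta$; thus $\alpha^p=\beta$ and $\alpha^{p+1}=\alpha\beta=-1$. Both cases collapse to the single statement $\alpha^{p-\varepsilon}=\varepsilon$ (and likewise $\beta^{p-\varepsilon}=\varepsilon$). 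Substituting into the reduced Binet formula, $P_{p-\varepsilon}\equiv(\varepsilon-\varepsilon)/(\alpha-\beta)=0$ in $k$, i.e.\ $p\mid P_{p-\varepsilon}$; therefore $f$ divides $p-\varepsilon$.

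To conclude, let $p$ be a primitive prime factor of $u_n$, where $u$ is $P$ or $Q$. By definition $p\mid u_n$ but $p\nmid u_i$ for $1\le i<n$, so $n$ is precisely the rank of apparition of $p$ in $\{u_n\}$. If $u=P$, then $n=f\mid p-\varepsilon$, whence $p\equiv\varepsilon\equiv\pm1\pmod n$. If $u=Q$, then $f$ is even with $n=f/2$, so $f=2n$ divides $p-\varepsilon$, and a fortiori $n\mid p-\varepsilon$, so again $p\equiv\pm1\pmod n$. This proves the assertion.

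I expect the Frobenius step in the inert case to be the point requiring the most care: precisely describing the action of $x\mapsto x^p$ on $s=\sqrt2$, and verifying that the resulting equality $P_{p-\varepsilon}\equiv0$ in $k$ really forces the divisibility $p\mid P_{p-\varepsilon}$ of rational integers. The rest --- the order-of-$t$ bookkeeping, the identification of the rank of apparition of $p$ in $\{Q_n\}$ with $f/2$, and the step ``primitive prime factor $\Rightarrow$ its index equals the rank of apparition'' --- is routine. For a general non-degenerate binary recurrence the same argument runs with $8$ replaced by the discriminant $D$ and $\varepsilon$ by $\big(\frac{D}{p}\big)$, after setting aside the finitely many primes dividing $D$; only the Pell and associated Pell cases are needed here.
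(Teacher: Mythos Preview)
The paper does not prove this lemma; it merely cites \cite{Bravo 2015} (the underlying fact is classical, going back to Carmichael \cite{carm 1913}). Your proposal supplies a correct self-contained argument for the two sequences $\{P_n\}$ and $\{Q_n\}$ actually used in the paper. The Frobenius computation in $\mathbb{F}_p$ or $\mathbb{F}_{p^2}$ yielding $\alpha^{p-\varepsilon}=\beta^{p-\varepsilon}=\varepsilon$, and hence $p\mid P_{p-\varepsilon}$, is the standard mechanism; your identification of the rank of apparition of $p$ in $\{P_n\}$ (respectively $\{Q_n\}$) with the multiplicative order $f$ of $t=\alpha/\beta$ in $k^\times$ (respectively with $f/2$) is clean and correct. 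The passage from ``$P_{p-\varepsilon}\equiv 0$ in $k$'' to ``$p\mid P_{p-\varepsilon}$ in $\Z$'' that you flag as the delicate point is in fact harmless: $P_{p-\varepsilon}$ is a rational integer, so its image in $k$ lies in the prime subfield $\mathbb{F}_p$, and vanishing there is exactly divisibility by $p$. Your closing remark on replacing $8$ by the discriminant $D$ and $\varepsilon$ by $\big(\tfrac{D}{p}\big)$ is the right generalisation. So where the paper outsources the proof to a reference, you have provided one; there is no further comparison to make.
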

\begin{lemma}\label{2.8}{\rm{(\cite{Bravo 2015}, Lemma 2.1)}}.
 A Primitive prime factor of $P_n$ exists if $n\geq 3$ and that of $Q_n$ exists if $n\geq 2$.
 \end{lemma}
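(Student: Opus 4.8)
The plan is to view $\{P_n\}$ and $\{Q_n\}$ as the Lucas and (half the) companion Lucas sequences attached to $\alpha=1+\sqrt2$, $\beta=1-\sqrt2$: here $\alpha+\beta=2$ and $\alpha\beta=-1$ are coprime nonzero integers, the discriminant $(\alpha-\beta)^2=8$ is positive, and $\alpha/\beta$ is not a root of unity, so Carmichael's primitive divisor theorem applies in its sharp real-root form and gives that $P_n=\frac{\alpha^n-\beta^n}{\alpha-\beta}$ has a primitive prime factor for every $n>12$. This settles the statement for $P_n$ when $n\geq 13$, and the finitely many remaining cases $3\leq n\leq 12$ are disposed of by direct factorisation, exhibiting in each a prime that divides $P_n$ but no earlier Pell number --- e.g. $5\mid P_3$, $3\mid P_4$, $29\mid P_5$, $7\mid P_6$, $13\mid P_7$, $17\mid P_8$, $197\mid P_9$, $41\mid P_{10}$, $5741\mid P_{11}$, $11\mid P_{12}$.

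For $Q_n$ I would not invoke a separate theorem but transfer the Pell result through the identity $P_{2n}=2P_nQ_n$ of Lemma~\ref{2.1}. Fix $n\geq 2$ and let $p$ be a primitive prime factor of $P_{2n}$ --- such $p$ exists by Carmichael's theorem when $2n>12$ and by the factorisations listed above when $2n\leq 12$. By Lemma~\ref{2.7}, $p\equiv\pm 1\pmod{2n}$, so $p$ is odd; and since $p$ is primitive for $P_{2n}$ while $n<2n$, we have $p\nmid P_n$, so the relation $p\mid 2P_nQ_n$ forces $p\mid Q_n$. Finally $p$ is primitive for the sequence $\{Q_k\}$: if $p\mid Q_m$ for some $1\leq m<n$ then $p\mid P_{2m}$ with $2m<2n$, contradicting the primitivity of $p$ for $P_{2n}$. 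Hence $Q_n$ has a primitive prime factor for every $n\geq 2$ (one may double-check the small cases directly: $3\mid Q_2$, $7\mid Q_3$, $17\mid Q_4$, $41\mid Q_5$, $11\mid Q_6$).

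The two points that need care are, first, quoting Carmichael's theorem in exactly the right form and confirming that the Pell sequence has no sporadic exception in the range $n\leq 12$: the only place one might fear a failure is $n=12$ --- this is where the Fibonacci analogue $F_{12}=144$ has no primitive divisor --- but $P_{12}=2^2\cdot 3^2\cdot 5\cdot 7\cdot 11$ contains the primitive prime $11$, so the Pell sequence is clean. Second, the prime $2$ must be handled separately in the $Q_n$ step, since $2\mid P_{2n}$ always while $2\nmid Q_n$; this is exactly why Lemma~\ref{2.7} is used to force the transferred prime to be odd, rather than appealing to a companion-sequence primitive divisor theorem directly. A heavier but more uniform alternative would be to cite the Bilu--Hanrot--Voutier classification of primitive divisors of Lucas sequences, which handles all $n>30$ at once and tabulates the exceptions for $n\leq 30$; Carmichael's real-root bound is sharper here and keeps the finite verification short.
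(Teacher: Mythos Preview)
The paper does not supply a proof of this lemma at all: it is quoted from \cite{Bravo 2015} (and Carmichael's theorem \cite{carm 1913} sits in the bibliography as background), so there is no in-paper argument to compare against. Your proof is correct and self-contained. The appeal to Carmichael's real-root primitive divisor theorem for $n>12$ together with the explicit check for $3\le n\le 12$ handles $P_n$ cleanly, and your transfer to $Q_n$ via $P_{2n}=2P_nQ_n$ is a nice way to avoid quoting a separate companion-sequence result; the use of Lemma~\ref{2.7} to ensure the transferred prime is odd is exactly the care the argument needs. One cosmetic point: in the final parenthetical check you list $41\mid Q_5$ and $11\mid Q_6$, which duplicates information already implicit in the $P_{10}$ and $P_{12}$ entries via your transfer argument, so those could be omitted.
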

 \begin{lemma}\label{2.9}{\rm{( \cite{Daniel 2000}, Theorem 4)}}
 There exist a prime factor $p$ of $P_n$ such that $p\equiv1(\bmod~4)$ if $n \neq 0,1,2,4,14$.
 \end{lemma}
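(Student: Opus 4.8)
The plan is to reduce the statement to the case of odd index, where it drops out of a single Pell identity, and then to recover the even case from an odd divisor.

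First I would treat odd $n\ge 3$. Here $v_2(n)=0$, so $P_n$ is odd by Lemma~\ref{2.1}(6), and since $P_n\ge P_3=5>1$ it has an odd prime divisor $p$. Because $n$ is odd, Lemma~\ref{2.1}(2) reads $Q_n^2-2P_n^2=-1$, i.e. $2P_n^2=Q_n^2+1$; reducing modulo $p$ gives $Q_n^2\equiv-1\pmod p$, so $-1$ is a quadratic residue modulo $p$, whence $p\equiv 1\pmod 4$. (The index $n=1$ is a genuine exception, since $P_1=1$.) So every odd $n\ge 3$ already works.

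Next I would handle even $n$ by writing $n=2^s t$ with $t$ odd. If $t\ge 3$ then $P_t\mid P_n$ by Lemma~\ref{2.1}(4), and the odd case supplies a prime $p\equiv 1\pmod 4$ dividing $P_t$, hence $P_n$. This leaves $n=2^s$. For $s\le 2$ one checks directly that $P_0=0$, $P_2=2$ and $P_4=2^2\cdot 3$ have no prime factor $\equiv 1\pmod 4$; these are the listed exceptions. For $s\ge 3$ we have $8\mid 2^s$, so $P_8\mid P_n$ by Lemma~\ref{2.1}(4), and since $P_8=408=2^3\cdot 3\cdot 17$ with $17\equiv 1\pmod 4$, we get $17\mid P_n$. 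Altogether this yields the conclusion for every $n\notin\{0,1,2,4\}$, a fortiori for every $n\notin\{0,1,2,4,14\}$.

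I expect the only real friction to be the pure power-of-two case: there the identity of the first step is unavailable and $n$ has no odd proper divisor to feed into the reduction. One could instead try to use a primitive prime divisor of $P_{2^s}$, but Lemma~\ref{2.7} only guarantees $p\equiv\pm 1\pmod{2^s}$, and the branch $p\equiv-1\pmod{2^s}$ yields $p\equiv 3\pmod 4$; making that route work would require the sharper congruence $p\equiv\left(\frac{2}{p}\right)\pmod{2^s}$ valid for the Pell sequence, so it seems cleaner simply to observe that $17\mid P_8\mid P_{2^s}$ for all $s\ge 3$.
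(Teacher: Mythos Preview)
Your argument is correct. The paper does not supply its own proof of Lemma~\ref{2.9}; it simply quotes the result from McDaniel~\cite{Daniel 2000}. What you have written is a clean, self-contained proof using only the identities already collected in Lemma~\ref{2.1}: the key observation that for odd $n$ the relation $Q_n^2-2P_n^2=-1$ forces every odd prime divisor of $P_n$ to satisfy $p\equiv 1\pmod 4$, together with the divisibility $P_t\mid P_n$ to pull this down from an odd divisor $t$ of $n$, and the explicit factor $17\mid P_8\mid P_{2^s}$ to dispose of the pure powers of~$2$.

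One point worth flagging: your proof actually establishes the stronger statement with exceptional set $\{0,1,2,4\}$, and you are right that $P_{14}=2\cdot 13^2\cdot 239$ has the factor $13\equiv 1\pmod 4$. McDaniel's Theorem~4 in \cite{Daniel 2000} is phrased in the context of determining when $P_n$ has the shape $kx^2$, and the exclusion of $n=14$ there reflects the fact that $13$ occurs to an \emph{even} power in $P_{14}$; the paper's transcription of the lemma drops that nuance. For the way Lemma~\ref{2.9} is invoked in the proof of Theorem~\ref{3.1} (one only needs some prime factor $\equiv 1\pmod 4$, with no parity condition on its exponent), your version is exactly what is required.
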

 \begin{lemma}\label{2.10}{\rm{(\cite{pell}, Pell-Lucas numbers)}}.
 If the associated Pell number $Q_n$ is a prime then $n$ is either a prime or a power of $2$. 
 \end{lemma}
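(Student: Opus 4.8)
The plan is to prove the contrapositive: if $n$ is neither a prime nor a power of $2$, then $Q_n$ is composite (in particular, not prime). The engine is the divisibility rule of Lemma~\ref{2.1}(5), which gives $Q_m\mid Q_n$ whenever $m\mid n$ and $n/m$ is odd, together with the elementary observation that $(Q_k)_{k\ge 1}$ is strictly increasing (immediate from $Q_{k+1}=2Q_k+Q_{k-1}>Q_k$ and $Q_1=1$). So everything reduces to producing, for such an $n$, a divisor $m$ of $n$ with $1<m<n$ and $n/m$ odd.

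To find $m$ I would argue as follows. Since $n$ is not a power of $2$, it has an odd prime factor $q$; take $m=n/q$. Then $m\mid n$ and $n/m=q$ is odd, so Lemma~\ref{2.1}(5) gives $Q_m\mid Q_n$. Because $q\mid n$ while $n$ is not prime, we cannot have $n=q$, so $q<n$ and hence $2\le m<n$. Strict monotonicity then yields $3=Q_2\le Q_m<Q_n$, so $Q_m$ is a divisor of $Q_n$ lying strictly between $1$ and $Q_n$; therefore $Q_n$ is composite, which is exactly the contrapositive.

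I do not expect a real obstacle here. The one point requiring care is that $m$ must be chosen so that the cofactor $n/m$ is odd, for otherwise Lemma~\ref{2.1}(5) does not apply (for instance $Q_{2^t}\nmid Q_{2^{t+1}}$ when $t\ge 1$); this is why one strips off an odd prime factor rather than a power of $2$. As a consistency check, in the case $n=3\cdot 2^t$ the construction simply reproduces the explicit identity $Q_{3\cdot 2^t}=Q_{2^t}(4Q_{2^t}^2-3)$ of Lemma~\ref{2.1}(9), and since the converse of the lemma fails ($Q_n$ need not be prime when $n$ is), no further analysis is needed.
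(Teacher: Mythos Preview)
Your argument is correct. The contrapositive reduction together with Lemma~\ref{2.1}(5) and the strict monotonicity of $(Q_k)_{k\ge 1}$ is exactly the right mechanism, and your care in stripping off an \emph{odd} prime $q$ so that the cofactor $n/m=q$ is odd is the key point that makes Lemma~\ref{2.1}(5) applicable. The only cosmetic gap is the degenerate cases $n=0,1$, where $Q_0=Q_1=1$ is not prime anyway, so the contrapositive holds trivially; you may wish to note this in passing.

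As for comparison with the paper: the paper does not supply its own proof of this lemma but simply cites an external source (\cite{pell}), so there is nothing to compare your approach against. Your self-contained derivation from Lemma~\ref{2.1}(5) is the standard one and is entirely adequate.
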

 \begin{lemma} \label{2.11}{\rm{(\cite{Luca 1997}, Lemma 3)}}.
 If $n\neq 1,2,6$ then $\phi(n)\geq 2\sqrt{\frac{n}{3}}$.
 \end{lemma}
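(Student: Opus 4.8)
The plan is to reduce this classical estimate on the Euler function to a multiplicative computation. Squaring, the assertion is equivalent to $\phi(n)^2\ge\frac{4}{3}n$, that is, to $g(n)\ge\frac{4}{3}$, where $g(n):=\phi(n)^2/n$. Since $\phi$ is multiplicative, so is $g$, and on a prime power $g(p^a)=p^{2a-2}(p-1)^2/p^a=p^{a-2}(p-1)^2$. For fixed $p$ one has $g(p^{a+1})/g(p^a)=p>1$, so $g(p^a)$ is nondecreasing in $a$, whence $g(p^a)\ge g(p)=(p-1)^2/p$ for every $a\ge1$.

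The crucial point is that $g(q)\ge\frac{4}{3}$ for every odd prime power $q$. Indeed, $3(p-1)^2-4p=(3p-1)(p-3)\ge0$ for every odd prime $p$, with equality only at $p=3$, so $g(p)\ge\frac{4}{3}$ and hence $g(p^a)\ge\frac{4}{3}$ for all $a\ge1$. On the other hand $g(2)=\frac{1}{2}$, $g(4)=1$, and $g(2^a)=2^{a-2}\ge2$ for $a\ge3$; in particular $g(2^a)\ge1$ for every $a\ge2$. Writing $n=2^e m$ with $m$ odd, one argues by cases on $e$. If $e=0$ or $e\ge2$, then $g(2^e)\ge1$ and, as soon as $m>1$, the factor $g(m)$ is a product of terms $g(p_i^{a_i})\ge\frac{4}{3}$, so $g(m)\ge\frac{4}{3}$ and therefore $g(n)\ge\frac{4}{3}$; if moreover $e\ge3$ then even $m=1$ is harmless since $g(n)\ge g(2^e)\ge2$. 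If $e=1$, one needs $g(m)\ge\frac{8}{3}$, and this holds whenever $m$ has a prime factor $\ge5$, since then $g(m)\ge g(5)=\frac{16}{5}>\frac{8}{3}$, and also whenever $9\mid m$, since $g(9)=4$.

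After this reduction only a short explicit list of small $n$ survives (those whose odd part is $1$ or $3$ and whose $2$-part is at most $4$), and each of these is disposed of by computing $\phi(n)$ directly and comparing with $2\sqrt{n/3}$. The step most worth carrying out with care is the bookkeeping in the case $e=1$: one must not replace $g(p^a)$ by $g(p)^a$, since two independent factors $\frac{4}{3}$ fall short of $\frac{8}{3}$; the argument genuinely relies on a repeated odd prime contributing the larger quantity $g(p^a)$, together with the separate observation that the smallest odd prime power beyond $3$ already carries $g$ past $\frac{8}{3}$.
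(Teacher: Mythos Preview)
The paper does not supply a proof of this lemma; it is quoted from an external source.  Your multiplicative reduction via $g(n)=\phi(n)^2/n$ is the natural approach, and the case analysis on the $2$-adic valuation $e$ of $n$ is correctly organised.

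There is, however, a real gap at the final step.  You state that the residual list (odd part $1$ or $3$, $2$-part at most $4$, i.e.\ $n\in\{1,2,3,4,6,12\}$) ``is disposed of by computing $\phi(n)$ directly,'' but you do not carry out that computation.  If you do, you find that $n=4$ is a counterexample to the lemma as printed: $\phi(4)=2$ while $2\sqrt{4/3}=4/\sqrt{3}>2$.  Hence no proof of the stated inequality can succeed; $n=4$ must be added to the list of exceptions $\{1,2,6\}$.  (The paper's single application of the lemma, bounding $\phi(Q_n)$ for $n>16$, is unaffected since $Q_n$ is odd and larger than $4$.)  With that correction your argument is complete and in fact sharp, equality $g(n)=4/3$ occurring precisely at $n=3$ and $n=12$.

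One small clarification worth making explicit in the $e=1$ case: when $m$ has a prime factor $p\ge 5$, you use $g(m)\ge g(5)$; this is justified because the cofactor $m/p^{a}$ is odd, so $g(m/p^{a})$ is a (possibly empty) product of terms each at least $4/3$, hence $\ge 1$.  You implicitly rely on this but do not say it.
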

 
\section{Repdigits in Euler functions of Pell numbers}

We start the section by computing the least residues and periods of the Pell sequence $\{P_n\}_{n\geq0}$ modulo 11, 20, 40 to use then subsequently

\begin{table}[h!]
  \centering
 \caption{Periods of $P_n$}
  \label{tab:table1}
  \begin{tabular}{|l|c|r|}
  \hline
    $k$ & $P_n(\bmod~k)$ & Period\\
    \hline
    11 & 0, 1, 2, 5, 1, 7, 4, 4, 1, 6, 2, 10, 0, 10, 9, 6, 10, 4, 7, 7, 10, 5, 9, 1 & 24\\
    \hline
    20 &  0, 1, 2, 5, 12, 9, 10, 9, 8, 5, 18, 1 & 12\\
    \hline
    40 & 0, 1, 2, 5, 12, 29, 30, 9, 8, 25, 18, 21, 20, 21, 22, 25, 32, 9, 10, 29, 28, 5, 38,1 & 24\\
    \hline
  \end{tabular}
\end{table}
The following theorem which proves the nonexistence of repdigits with atleast two digits in the Euler function of Pell numbers is one of the main results of this paper. 
%%%%%%%%%%%%%%%%%%%%%%%%%%%%%%%%%%%%%%%%%%%%%%%%%%%%%%%%%%%%%%%%
%Theorem 3.1
%%%%%%%%%%%%%%%%%%%%%%%%%%%%%%%%%%%%%%%%%%%%%%%%%%%%%%%%%%%%%%%%
\begin{theorem}\label{3.1}
The equation 
\begin{equation} \label{3.1}
\phi{(P_n)}=d\cdot\frac{10^m-1}{9}
\end{equation}
has no solution in the positive integers $n,m,d$ such that $m\geq2$ and $d\in\{1,2,\ldots,9\}$.
\end{theorem}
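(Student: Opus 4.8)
The plan is to compare the $2$-adic valuations of the two sides of the equation $\phi(P_n)=d\cdot(10^m-1)/9$ and then exploit the multiplicative structure of $\phi$. Write the right-hand side as $d\cdot R_m$ with $R_m=(10^m-1)/9$; since $R_m$ is odd, $v_2(d\cdot R_m)=v_2(d)\in\{0,1,2,3\}$. If $d$ is odd, then $\phi(P_n)=d\cdot R_m$ is odd, which forces $P_n\le 2$, i.e.\ $n\in\{1,2\}$, and then $\phi(P_n)=1<11\le d\cdot R_m$; so there is no solution. Hence we may assume $d\in\{2,4,6,8\}$ and $v_2(\phi(P_n))=v_2(d)\le 3$.

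The next step is to turn the bound $v_2(\phi(P_n))\le 3$ into a strong restriction on $n$. From $v_2(P_n)=v_2(n)$ and $2\nmid Q_n$ (Lemma~\ref{2.1}(6)) together with $\phi(2^k)=2^{k-1}$ one gets
$$v_2(\phi(P_n))=\max\{0,v_2(n)-1\}+\sum_{p\mid P_n,\ p\ \mathrm{odd}}v_2(p-1)\ \le\ 3 ,$$
so $P_n$ has at most three distinct odd prime factors and at most one prime factor $\equiv 1\pmod 4$ (two such would already contribute $4$). Since, by Lemma~\ref{2.8}, each divisor $\delta\ge 3$ of $n$ yields a primitive --- hence odd --- prime factor of $P_\delta\mid P_n$ (Lemma~\ref{2.1}(4)), and distinct $\delta$ give distinct such primes, $n$ has at most three divisors $\ge 3$. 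Moreover, for $n\notin\{1,2,4,14\}$ Lemma~\ref{2.9} supplies a prime factor $\equiv 1\pmod 4$, so $v_2(\phi(P_n))\ge 2$, $d\in\{4,8\}$; in particular this already disposes of $d\in\{2,6\}$, and the case $4\mid n$ is then settled using Lemma~\ref{2.6} and the valuation bound. Combining these reductions leaves only $n\in\{q,\,q^2,\,2q\}$ for an odd prime $q$, together with finitely many small exceptions which are checked directly ($\phi(P_1)=\phi(P_2)=1$, $\phi(P_4)=4$, $\phi(P_8)=128$, $\phi(P_{14})=37128$, none a repdigit with at least two digits).

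It remains to treat $n\in\{q,q^2,2q\}$ with $q$ an odd prime ($2q\neq14$) and $d\in\{4,8\}$. Let $p_1$ be the unique prime factor of $P_n$ with $p_1\equiv1\pmod4$; then $P_n$ has at most one further odd prime factor $p_2$, which (if present) is $\equiv3\pmod4$ with $p_1\equiv5\pmod8$ and $d=8$. For $n=2q$, the identity $P_{2q}=2P_qQ_q$ with $(P_q,Q_q)=1$ (Lemma~\ref{2.1}(1),(3)) and the bounds $P_q\ge5$, $Q_q\ge7$ force $P_q$ and $Q_q$ to be prime powers; Lemmas~\ref{2.2}, \ref{2.4}, \ref{2.3} (which control when Pell numbers, associated Pell numbers, and their products are perfect powers or squares) then pin down the exponents, reducing to $P_q$ and $Q_q$ prime, the square case only arising at $q=7$ (already excluded). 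For $n$ odd, Lemma~\ref{2.2} rules out $P_n$ being a proper prime power except $P_7=13^2$, and comparing primitive prime factors along $P_q\mid P_{q^2}$ reduces $n=q^2$ to the case $P_q$ prime; so one reaches a configuration in which $\phi$ is a product of terms $p-1$ over one or two primes $p$ with prescribed residues, and $P_n$ (or $P_q$, or $Q_q$) is prime.

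The endgame then uses Table~\ref{tab:table1}. If $P_n=p$ is prime with $\phi(P_n)=p-1=d\cdot R_m$, then $p=d\cdot R_m+1$ and, for $m\ge2$, $R_m\equiv11\pmod{20}$. For $d=4$ this gives $p\equiv45\equiv5\pmod{20}$, so $5\mid p$ and $p=5$, contradicting $p\ge45$. For $d=8$, using also $R_m\equiv1\pmod5$, we get $p\equiv8R_m+1\equiv9\pmod{40}$; by Table~\ref{tab:table1}, $P_n\equiv9\pmod{40}$ only when $n\equiv7$ or $17\pmod{24}$, and for those $n$ the same table gives $P_n\equiv4\pmod{11}$, contradicting $p=8R_m+1\equiv1$ or $9\pmod{11}$ (according as $m$ is even or odd, since $R_m\equiv0$ or $1\pmod{11}$). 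The residual two-prime configurations, of the shape $(p_1-1)(p_2-1)=8R_m$ with $p_1\equiv5\pmod8$ and $p_2\equiv3\pmod4$, are disposed of by reading this identity modulo $5$ (using $5\nmid Q_n$, Lemma~\ref{2.1}(8)) and modulo the moduli appearing in Table~\ref{tab:table1}, together with the tabulated periods. I expect the main obstacle to be exactly this middle stage: organizing the case analysis for $v_2(\phi(P_n))\in\{2,3\}$ so that every configuration in which $P_n$ carries two distinct odd prime factors is brought down, via the Diophantine lemmas and the Pell identities, to one in which $P_n$, $P_q$ or $Q_q$ is prime and the congruence data of Table~\ref{tab:table1} can be applied; the valuation computations and the finitely many explicit checks are routine by comparison.
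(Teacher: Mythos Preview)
Your overall strategy matches the paper's: bound $v_2(\phi(P_n))\le 3$, use Lemma~\ref{2.9} to force $d\in\{4,8\}$, restrict the shape of $P_n$ via primitive prime factors and the Diophantine Lemmas~\ref{2.2}--\ref{2.6}, and finish the one-prime case $P_n=p$ with the congruence argument modulo $40$ and $11$ exactly as the paper does. The organization differs only in that the paper splits first by the number of odd prime factors of $P_n$ and then by the parity of $n$, whereas you restrict $n$ first.

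There are, however, two genuine gaps in the two-prime configurations.

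First, for odd $n$ you never use the identity $Q_n^2-2P_n^2=-1$: reducing it modulo any prime $p\mid P_n$ gives $Q_n^2\equiv-1\pmod p$, so every prime factor of $P_n$ is $\equiv1\pmod4$ when $n$ is odd. This is precisely what kills the possibility of a second prime $p_2\equiv3\pmod4$ for odd $n$, and in particular what disposes of $n=q^2$ (two primitive primes, both $\equiv1\pmod4$, already force $v_2(\phi)\ge4$). Your line ``reduces $n=q^2$ to the case $P_q$ prime'' does not reach a contradiction without it.

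Second, for $n=2q$ with $P_q=p_1$, $Q_q=p_2$ both prime and $d=8$, the paper does not merely read congruences from Table~\ref{tab:table1}. It writes $p_1+p_2=\tfrac{P_n}{2}+1-8R_m$ and $p_1p_2=\tfrac{P_n}{2}$, forms the quadratic with roots $p_1,p_2$, and shows its discriminant is a non-square modulo $11$ for every residue of $P_n\pmod{11}$ compatible with Table~\ref{tab:table1} (splitting also on the parity of $m$), eliminating each surviving class via $P_{n_1}$ or $Q_{n_1}$ modulo $20$. This discriminant trick is the substantive step you yourself flag as ``the main obstacle'', and your sketch (``read modulo $5$ and the moduli of Table~\ref{tab:table1}'') does not supply it or a working alternative.

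A minor point: your reduction to $n\in\{q,q^2,2q\}$ actually requires the sharper count of at most \emph{two} odd prime factors of $P_n$ (one $\equiv1$ and at most one $\equiv3\pmod4$), which you only state afterwards; with the weaker ``at most three divisors $\ge3$'' the list would still contain $n=pq$ and $n=q^3$.
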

%%%%%%%%%%%%%%%%%%%%%%%%%%%%%%%%%%%%%%%%%%%%%%%%%%%%%%%%%%%%%%%%%%

\begin{proof}
For $n\leq16$, it is easy to see that there is no Pell number $P_n$ such that $\phi(P_n)$ is a repdigit with atleast two digits. Assume to the contrary that for some $n>16$, $\phi(P_n)$ is a repdigit, that is
\begin{equation*}
\phi{(P_n)}=d\cdot\frac{10^m-1}{9}
\end{equation*}
for some $d\in \{1,2,\ldots,9\}$ and for some $n$. Let $v_2(n)$ be the exponent of 2 in the factorization of a positive integer $n$. Since $\frac{10^m-1}{9}$ is odd, it follows that
\begin{equation}\label{3.2}
v_2(\phi{(P_n)})=v_2(d)\leq 3.
\end{equation}
By virtue of Lemma \ref{2.8}, there always exists a prime factor $p_1$ of $P_n$ such that $p_1 \equiv 1(\bmod~4)$. Clearly, $p_1-1|\phi{(P_n)}$ and $v_2(d)\geq 2$ which implies that either there exists another odd prime factor $p_2$ of $P_n$ such that $p_2\equiv3(\bmod~4)$ or $p_1$ is the only odd prime factor of $P_n$.\\

\par
Firstly, assume that $P_n$ has two distinct prime factors $p_1$ and $p_2$ such that $p_1\equiv 1(\bmod~4)$, $p_2\equiv 3(\bmod~4)$. If $n$ is odd then reducing the relation (2) in Lemma \ref{2.1} modulo $p_2$, we get $Q_n^2 \equiv -1(\bmod ~p_2)$ which implies that $-1$ is a quadratic residue modulo $p_2$. But this is possible only when $p_2 \equiv 1(\bmod ~4)$ which is a contradiction to $p_2\equiv3(\bmod~4)$. If $n$ is even then $P_n$ is even. Let $P_n=2^a\cdot p^b_1\cdot p^c_2$. If $a >1$, then $2(p_1-1)(p_2-1)|\phi(P_n)$, which implies $v_2(\phi(P_n)) \geq 4$, which contradicts (\ref{3.2}). Hence, $a=1$ and consequently $P_n=2\cdot p_1^b\cdot p_2^c$. Let $n=2n_1$. If $n_1$ is even, then $4|n$ and hence $4|P_n$ leading to $a\geq 2$. Hence, $n_1$ must be odd. Since $P_{2n_1}=2P_{n_1}Q_{n_1}$ and $(P_{n_1},Q_{n_1})=1$ from Lemma \ref{2.1}, it follows that $P_{n_1}=p_1^b$ and $Q_{n_1}=p_2^c$. Since $n_1>8$, it follows from Lemma \ref{2.2} and \ref{2.4} that $b=c=1$ and consequently $P_n=2p_1p_2$ which implies that $v_2(d)\geq 3$. Hence, the only possible value of $d$ is $8$ and from Equation \eqref{3.1} it follows that 

\begin{align*}
8\cdot\frac{10^m-1}{9}&=\phi(P_n)=\phi(2p_1p_2)\\&=(p_1-1)(p_2-1)\\
&=p_1p_2+1-(p_1+p_2)\\
&=\frac{P_n}{2}+1-(p_1+p_2).
\end{align*} 
Therefore,
$$p_1+p_2=\frac{P_n}{2}+1-8\cdot\frac{10^m-1}{9}$$ and $p_1p_2=\frac{P_n}{2}$. Thus, the quadratic equation with $p_1$ and $p_2$ as roots is :
$$x^2-\bigg(\frac{P_n}{2}+1-8\cdot \frac{10^m-1}{9} \bigg)x+\frac{P_n}{2}=0$$
Since this equation has integer solutions, it's discriminant $\bigtriangleup$ must be a perfect square. But
\begin{align*}
\bigtriangleup &= \bigg(\frac{P_n}{2}+1-8\cdot \frac{10^m-1}{9} \bigg)^2-4\cdot \frac{P_n}{2}\\
               &\equiv (6P_n+1-8\cdot9^{-1}((-1)^m-1) )^2-2P_n ~(\bmod~11) .           
\end{align*}
If $m$ is even, then 
\begin{align*}
\bigtriangleup &\equiv (6P_n+1)^2-2P_n\equiv 36P_n^2+1+10P_n\equiv 3P_n^2-P_n+1~(\bmod~11).
\end{align*}
In view of Table 1, the possible values of $P_n$ such that $3P_n^2-P_n+1$ is a quadratic residue modulo $11$ are $P_n\equiv 0,4,6,9(\bmod~11)$.
\begin{enumerate}
\item If $P_n\equiv 0(\bmod~11)$ then $n\equiv 0,12(\bmod~24)$ and consequently $n_1\equiv 0,6(\bmod~12)$. Hence, $p_1=P_{n_1}\equiv0,10(\bmod~20)$ which is not possible since $p_1$ is a prime.\\

 \item If $P_n\equiv4(\bmod~11)$ then $n\equiv6,7,17(\bmod~24)$. Since 
$n=2n_1$, the only possibility for $n_1$ is $n_1\equiv3(\bmod~12)$. But this implies that $p_1= P_{n_1}\equiv5(\bmod~20)$ which implies that $P_{n_1}=5,$ which implies that $n_1=3$ and consequently $n=6$ which is not possible since we have assumed that $n>16$.\\
 \item If $P_n\equiv 6(\bmod~11)$ then $n\equiv 9,15(\bmod~24)$. Since $n$ is even, this is not possible.\\
 \item If $P_n\equiv9(\bmod~11)$ then $n\equiv14,22(\bmod~24)$ and then $n_1\equiv 7,11(\bmod~12)$. Consequently, $p_1= P_{n_1}\equiv1,9(\bmod~20)$ and $p_2\equiv Q_{n_1}\equiv19(\bmod~20)$. Thus, $$\phi(P_n)=(p_1-1)(p_2-1)\equiv0,4(\bmod~20).$$ Substitution in Equation \eqref{3.1} gives  $18\equiv0,4(\bmod~20)$, which is not possible.
\end{enumerate}
If $m$ is odd then
 \begin{align*}
  \bigtriangleup&\equiv (6P_n-8\cdot9^{-1}((-1)^m)-1 )^2-2P_n \\ 
                &\equiv (6P_n+4)^2-2P_n\\
                &\equiv 3P_n^2+2P_n+5~(\bmod~11)
 \end{align*}
Modulo 11, the possible values of $P_n$ such that $3P_n^2+2P_n+5$ is a quadratic residue, are $6$ and $7$. We have seen that Equation \eqref{3.1} has no solution when $P_n\equiv 6(\bmod~11)$. If $P_n\equiv7(\bmod~11)$ then $n\equiv 5,18(\bmod~24)$. Since $n=2n_1$ is even, the only possible value of $n$ is 18. Thus, $n_1\equiv 9(\bmod~12)$. Consequently, $p_1= P_n\equiv5(\bmod~20)$ which implies that $P_{n_1}=5$ and hence $n_1=3$. But, this is not possible since we have assumed that $n_1>8$.\\
 \par
Secondly, assume that there exist only one odd prime factor $p_1$ of $P_n$. If $n$ is even (say $n=2n_1$), then $P_n$ is even and then $P_n=2P_{n_1}Q_{n_1}=2^ap_1^b$. Then, of course, $P_{n_1}=2^{a-1}$ and $Q_{n_1}=p_1^{b}$. 
If $P_{n_1}=2^{a-1}$, then in view of Lemmas \ref{2.2} and \ref{2.4} the possible values of $n_1$ are $n_1=1,2$ and consequently $n=2,4$ which contradicts our assumption of $n>16$. If $n$ is odd, then $P_n=p_{1}^b$. Once again by virtue of Lemma \ref{2.2} if $P_n=p_{1}^b$ and $b\geq 2$, then the only non-trivial possibility is $n=7$ which contradicts our assumption that $n>16$. Hence $b=1$ and consequently $P_n=p$. Therefore $\phi(P_n)=p-1=P_n-1$ which is a multiple of 4. Thus, $d\in\{4,8\}$.\\

If $d=4$, then
$$P_n=4\cdot \frac{10^m-1}{9}+1=\frac{4\cdot 10^m+5}{9}$$
which is a multiple of 5. Hence, $P_n$ is not a prime. If $d=8$, then Equation (\ref{3.1}) can be written as 
\begin{align}\label{3.3}
9P_n-1=8\cdot 10^m=2^{m+3}5^m.
\end{align}
Since $m\geq1$, from Table 1 $9P_n-1\equiv0(\bmod~40)$ and then $n\equiv7,17(\bmod~24)$. But the Pell sequence modulo 11 has period 24. If $n\equiv7,17(\bmod~24)$, then $P_n\equiv4(\bmod~11)$. Reducing Equation (\ref{3.3}) modulo 11, we get $35\equiv8\cdot10^m(\bmod~11)$ which finally results in $3\equiv\pm1(\bmod~11),$ which is not possible. Hence, $\phi(P_n)$ cannot be a repdigit consisting of atleast two digits for any natural number $n$.
\end{proof}
%%%%%%%%%%%%%%%%%%%%%%%%%%%%%%%%%%%%%%%%%%%%%%%
\section{Repdigits in Euler functions of associated Pell numbers}

In this section, we prove that if the  Euler function of an associated Pell number $Q_n$ is of the form $d(10^m-1)/9$ for some $n,m\in N$ and $d \in \{1,2,\cdots,9\}$, then it satisfies some conditions which is discussed in several theorems. We need the least residues and periods of associated Pell sequence $\{Q_n\}_{n\geq0}$ modulo $4$, $5$, $8$, $20$ to prove the main results of this section. We list them in the following table.
\begin{table}[h!]
  \centering
 \caption{Periods of $Q_n$}
  \label{tab:table2}
  \begin{tabular}{|l|c|r|}
  \hline
    $k$ & $Q_n(\bmod~k)$ & Period\\
    
    \hline
    4 & 1, 1, 3, 3 & 4\\
    \hline
    5 &  1, 1, 3, 2, 2, 1, 4, 4, 2, 3, 3, 4 & 12\\ 
    \hline
    8 & 1, 1, 3, 7 & 4\\
    \hline
    20 & 1, 1, 3, 7, 17, 1, 19, 19, 17, 13, 3, 19 & 12\\
    \hline
    
  \end{tabular}
\end{table}
\par
For $n\leq16$, it is easy to see 1, 3, 7 are the only numbers in the associated Pell sequence $\{Q_n\}_{n\geq 0}$ such that  $\phi(Q_n)$ is a repdigit. If $\phi(Q_n)$ is a repdigit for some $n>16$, then by Lemma \ref{2.10}, $\phi(Q_n)\geq \frac{2}{\sqrt{3}}\sqrt{Q_n}\geq \frac{2}{\sqrt{3}}\sqrt{Q_{17}}>450390>10^5$ and thus $m>5$. Further, since $Q_n$ is odd for all integers $n$, its canonical decomposition can be written as
\begin{align}\label{4.1}
Q_n=p_1^{a_1}p_2^{a_2}\cdots p_r^{a_r},
\end{align}
where $r\geq0$, $p_1,p_2,\ldots,p_r$ are distinct odd primes and $a_i>0$ for all $i=1,2,\ldots,r$.
Then 
\begin{align}\label{4.2}
\phi(Q_n)=p_1^{a_1-1}p_2^{a_2-1}\cdots p_r^{a_r-1}(p_1-1)(p_2-1)\cdots(p_r-1).
\end{align}
If $\phi(Q_n)=d(10^m-1)/9$ for some $n$ then
\begin{align}\label{4.3}
v_2(\phi(Q_n))=\sum_{i=1}^rv_2(p_i-1)=v_2(d),
\end{align}
where $v_2(d)\in\{0,1,2,3\}$. 
%%%%%%%%%%%%%%%%%%%%%%%%%%%%%%%%%%%%%%%%%%%%%%%%%

The following theorem deals with the nonexistence of repdigits for $d \neq4,8$.

%%%%%%%%%%%%%%%%%%%%%%%%%%%%%%%%%%%%%%%%%%%%%%%%%%%%%%%%%
%Theorem 4.1
%%%%%%%%%%%%%%%%%%%%%%%%%%%%%%%%%%%%%%%%%%%%%%%%%%%%%

\begin{theorem}\label{4.1}
If $d\in\{1,2,3,5,6,7,9\}$ then there is no positive integer solutions of the equation 
\begin{equation}\label{4.4}
\phi{(Q_n)}=d\cdot\frac{10^m-1}{9}
\end{equation}
for any $n>16$.
\end{theorem}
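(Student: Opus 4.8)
\emph{Proof strategy.} The whole argument is powered by the $2$-adic identity \eqref{4.3}: since $p_1,\dots,p_r$ are odd primes, each $v_2(p_i-1)\ge 1$, so \eqref{4.3} forces $r\le v_2(d)$, where $r$ is the number of distinct prime divisors of $Q_n$. I would split according to the parity of $d$. If $d\in\{1,3,5,7,9\}$ then $v_2(d)=0$, hence $r=0$ and $Q_n=1$, which is absurd for $n>16$. So only $d\in\{2,6\}$ survive, and for these $v_2(d)=1$ forces $r=1$ together with $v_2(p_1-1)=1$; thus $Q_n=p_1^{a_1}$ is a prime power with $p_1\equiv 3\pmod{4}$.

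The next step is to upgrade ``prime power'' to ``prime''. By Lemma \ref{2.4} the equation $Q_n=y^{a_1}$ has no solution with $a_1\ge 2$ and $n>1$, so $a_1=1$, i.e. $Q_n=p_1$ is prime and $Q_n\equiv 3\pmod{4}$. Reading the period of $\{Q_n\}$ modulo $4$ from Table 2, this gives $n\equiv 2$ or $3\pmod{4}$; but $n\equiv 2\pmod{4}$ would give $3\mid Q_n$ by Lemma \ref{2.1}(7), whence $Q_n=3$ and $n=2$, contradicting $n>16$. Therefore $n\equiv 3\pmod{4}$; in particular $n$ is odd.

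Finally, since $Q_n$ is prime we have $\phi(Q_n)=Q_n-1$, and \eqref{4.4} rearranges to $9Q_n=2\cdot 10^m+7$ when $d=2$ and to $3Q_n=2\cdot 10^m+1$ when $d=6$. Using $m>5$, I would reduce the first relation modulo $20$ to obtain $Q_n\equiv 3\pmod{20}$, and the second modulo $8$ to obtain $Q_n\equiv 3\pmod{8}$. Comparing with the periods in Table 2, the case $d=2$ then forces $n\equiv 2,10\pmod{12}$ and the case $d=6$ forces $n\equiv 2\pmod{4}$; in both instances $n$ is even, contradicting $n\equiv 3\pmod{4}$. This closes every case. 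The only part that takes any care is the pair $d\in\{2,6\}$ --- the odd values of $d$ die instantly from the valuation count --- and within that the delicate point is choosing moduli (here $20$ and $8$) whose admissible residue classes for $Q_n$, as listed in Table 2, are all incompatible with $n\equiv 3\pmod{4}$.
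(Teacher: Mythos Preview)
Your proof is correct, and the overall architecture matches the paper's: kill odd $d$ instantly via $v_2(d)=0\Rightarrow r=0$, then for $d\in\{2,6\}$ use \eqref{4.3} to get $r=1$, $p_1\equiv 3\pmod 4$, and Lemma~\ref{2.4} to force $Q_n=p_1$ prime. From there, however, you and the paper diverge.

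The paper, having $Q_n$ prime, invokes Lemma~\ref{2.10} (so $n$ is prime or a power of $2$), reduces $Q_n=d\cdot\frac{10^m-1}{9}+1$ modulo $20$ in \emph{both} cases to get $n\equiv 2,10\pmod{12}$ when $d=2$ and $n\equiv 3\pmod{12}$ when $d=6$, and then checks by hand that no such $n>16$ can be prime or a power of $2$. Your route is more self-contained: you first exploit $Q_n\equiv 3\pmod 4$ together with Lemma~\ref{2.1}(7) to pin down $n\equiv 3\pmod 4$ once and for all, and then choose the reduction modulus ($20$ for $d=2$, $8$ for $d=6$) so that the resulting residue class for $n$ is \emph{even}, yielding an immediate parity contradiction. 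This sidesteps Lemma~\ref{2.10} completely and replaces the small case analysis ``is $2(6k+1)$ a power of $2$? is $3(4k+1)$ prime?'' by a single parity check, which is arguably cleaner. The paper's uniform use of modulus $20$ is tidier bookkeeping, but your mixed choice of $20$ and $8$ buys you the shorter endgame.
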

%%%%%%%%%%%%%%%%%%%%%%%%%%%%%%%%%%%%%%%%%%%%%%%%%%%%%%%%%%
\begin{proof}
In view of Equation (\ref{4.3}), if $v_2(d)=0$, then $d\in\{1,3,5,7,9\}$ and consequently $\phi(Q_n)$ is odd, which is only possible for $n=0,1$. If $v_2(d)=1$, then $d=2,6$ and in view of Equation \eqref{4.3}, $r=1$ and $p_1=3(\bmod~4)$. In particular, $Q_n=p_1^{a_1}$ and an application of Lemma \ref{2.4} gives $a_1=1$ and hence
\begin{align*}
\phi(Q_n)=Q_n-1=d\cdot \frac{10^m-1}{9},
\end{align*}
 implies that
\begin{align*}
Q_n=d\cdot \frac{10^m-1}{9}+1.
\end{align*}
If $d=2$, then $Q_n=3(\bmod~20)$ and using Table 2, we get $n\equiv2,10(\bmod~12)$. Now, it follows from Lemma \ref{2.9} that $n$ is either prime or a power of $2$. When $n\equiv2(\bmod~12)$, we can write $n$ as $n=2(6k+1)$ for some $k\geq0$, which is a power of 2 only if $k=0$. Hence $n=2$ which contradicts our assumption that $n>16$. If $n\equiv10(\bmod~12)$, then we can write $n$ as  $n=2(6l+5)$ for some $l\geq0$ which is not a power of 2 for any $l$. If $d=6$, then $Q_n=7(\bmod~20)$ and  from Table 2 it follows that $n\equiv3(\bmod~12)$. Thus, $n=3(4k+1)$ which is a prime only if $k=0$. Hence the only possibility left is $n=3$, which contradicts our assumption that $n>16$.
\end{proof}
%%%%%%%%%%%%%%%%%%%%%%%%%%%%%%%%%%%%%%%%%%%%%%
%Theorem 4.2
%%%%%%%%%%%%%%%%%%%%%%%%%%%%%%%%%%%%%%%%%%%%%%%%%%%%%

The following theorem gives a structure of prime factors of $Q_n$ when $\phi(Q_n)$ is a repdigit consisting of $4$'s or $8$'s.
\begin{theorem}\label{4.2}
If $n$ is even, $\phi(Q_n)$ is a repdigit for some $n>16$, i.e, $\phi(Q_n)=d\cdot \frac{10^m-1}{9}$ where $d\in\{4,8\}$, then all prime factors in the canonical decomposition of $Q_n$ are congruent to $3$ modulo $4$.
\end{theorem}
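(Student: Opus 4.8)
The plan is to argue by contradiction. Suppose $\phi(Q_n)=d\cdot\frac{10^m-1}{9}$ with $d\in\{4,8\}$ and $n>16$ even, and suppose further that the canonical decomposition $Q_n=p_1^{a_1}\cdots p_r^{a_r}$ has at least one prime factor $p_j$ with $p_j\equiv 1\pmod 4$; since $Q_n$ is odd every $p_i$ is odd, hence $\equiv 1$ or $3\pmod 4$. The basic bookkeeping tool is Equation \eqref{4.3}, $v_2(\phi(Q_n))=\sum_{i=1}^{r}v_2(p_i-1)=v_2(d)$, which equals $2$ when $d=4$ and $3$ when $d=8$.

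The crucial and, I expect, hardest step is to confine the prime divisors of $Q_n$ to two residue classes modulo $8$. Reduce relation $(2)$ of Lemma \ref{2.1}, $Q_n^2-2P_n^2=(-1)^n$, modulo an arbitrary prime $p\mid Q_n$: as $n$ is even this gives $-2P_n^2\equiv 1\pmod p$, and since $(P_n,Q_n)=1$ by Lemma \ref{2.1} the residue of $P_n$ is a unit, so $-2$ is a quadratic residue modulo $p$. By the supplementary laws for the Legendre symbol, $\left(\frac{-2}{p}\right)=1$ holds precisely when $p\equiv 1$ or $3\pmod 8$. Therefore $v_2(p-1)=1$ if $p\equiv 3\pmod 8$ and $v_2(p-1)\ge 3$ if $p\equiv 1\pmod 8$; in particular $v_2(p-1)$ is never equal to $2$. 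Applied to $p_j$, this upgrades $p_j\equiv 1\pmod 4$ to $p_j\equiv 1\pmod 8$, so that $v_2(p_j-1)\ge 3$.

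If $d=4$ we are already done, because $v_2(\phi(Q_n))\ge v_2(p_j-1)\ge 3>2$, contradicting Equation \eqref{4.3}. If $d=8$, then $\sum_{i}v_2(p_i-1)=3$ with one summand $\ge 3$ and every summand $\ge 1$, which forces $r=1$ and $Q_n=p_1^{a_1}$; Lemma \ref{2.4} then gives $a_1=1$, so $Q_n=p_1$ is prime. From $\phi(Q_n)=Q_n-1$ we obtain $9Q_n=8\cdot 10^m+1$, and since $Q_n$ is prime, Lemma \ref{2.10} forces $n$ to be a power of $2$; as $n>16$ this means $n=2^t$ with $t\ge 5$. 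Reducing $9Q_n=8\cdot 10^m+1$ modulo $5$ gives $Q_n\equiv 4\pmod 5$, which by the period-$12$ pattern of $Q_n\pmod 5$ recorded in Table 2 requires $n\equiv 6,\,7$, or $11\pmod{12}$. But $n=2^t$ with $t\ge 5$ is divisible by $4$ and hence lies in none of those classes, a contradiction that finishes the proof.

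The one genuinely delicate ingredient is the quadratic-residue computation identifying the prime divisors of $Q_n$ with the classes $1,3\pmod 8$: without it, a prime $Q_n\equiv 5\pmod 8$ would pass the $2$-adic count when $d=4$. The remaining steps — the valuation bookkeeping, the reduction to $Q_n$ prime, and the final congruence contradiction read off from Table 2 — are routine.
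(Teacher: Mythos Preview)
Your proof is correct and follows the same overall strategy as the paper: a quadratic-residue computation on $Q_n^2-2P_n^2=(-1)^n$, $2$-adic bookkeeping via Equation~\eqref{4.3}, reduction to $Q_n$ prime through Lemma~\ref{2.4}, and a final contradiction modulo $5$ read off from Table~\ref{tab:table2}.

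There are two tactical differences worth noting. For $d=4$ the paper does \emph{not} use the quadratic-residue step; it argues directly that $r=1$ (since one prime $\equiv 1\pmod 4$ already contributes $v_2\ge 2$), deduces $Q_n$ is prime, and then obtains $Q_n\equiv 5\pmod{10}$, contradicting $5\nmid Q_n$. Your route---doing the Legendre-symbol computation first for \emph{every} prime divisor, so that $v_2(p-1)\in\{1\}\cup\{3,4,\dots\}$ and the case $d=4$ dies immediately---is cleaner and avoids that detour. Second, to pin down $n=2^t$ the paper compares primitive prime factors of $Q_{2^t}$ and $Q_n$, whereas you invoke Lemma~\ref{2.10} (together with the implicit observation that an even $n>2$ cannot be prime); both are valid. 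One cosmetic point: when citing Lemma~\ref{2.10} you might make explicit that $n$ even rules out the ``$n$ prime'' alternative.
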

\begin{proof}
Since $n$ is even, we can write $n$ as $n=2^t \cdot n_1$ where $t,n_1\in \mathbb{N}$ and $n_1$ is odd. Assume to the contrary that there exists a prime $p\equiv1(\bmod~4)$ in the prime factorization of $Q_n$. It follows from Equation (4.3) that number of prime factors of $Q_n$ is at most 2, that is $r\leq 2$. If $d=4$ then $r=1$ and $Q_n=p_1^{a_1}$. By virtue of Lemma \ref{2.4}, it follows that $Q_{n}=p_1$. Hence,
  \begin{align*}
  \phi(Q_{n})=Q_{n}-1=4\cdot \frac{10^m-1}{9}\equiv 4(\bmod~10).
  \end{align*}
Thus,
  \begin{align*}
  Q_{n}\equiv 5(\bmod~10),
  \end{align*}
which implies that $5|Q_{n}$ which is not true in view of Lemma \ref{2.1}. Reducing Identity $(2)$ of Lemma \ref{2.1}, modulo $p_1$ where $p_1|Q_n$ and $p_1\equiv1(\bmod~4)$, we get $-2P_n^2\equiv1(\bmod~p)$ which implies that $\big(\frac{-1}{p}\big)=1$ and hence $\big(\frac{2}{p}\big)=1$. Thus, $p\equiv1(\bmod~8)$ and hence $d=8$, $r=1$ and $Q_{n}=p_1^{a_1}$ which implies that $Q_{n}=p_1$ by Lemma \ref{2.4}. Since $Q_{2^t}|Q_n$, there exist primitive prime factors of $Q_{2^t}$ and $Q_n$ which divides $Q_n$. Hence, $n=2^t$, otherwise $Q_n$ have more than one prime factors which contradicts $r=1$. Now,
   
   \begin{align*}
  \phi(Q_{2^t})=Q_{2^t}-1=8\cdot \frac{10^m-1}{9}\equiv 3(\bmod~5)
  \end{align*}
 implies that  $Q_{2^t}\equiv 4(\bmod~5)$ which is not possible since $Q_{2^t}\equiv2, 3(\bmod~5)$ (see Table \ref{tab:table2}). Hence there does not exist any prime factor $p$ such that $p\equiv1(\bmod~4)$.

\end{proof}
%%%%%%%%%%%%%%%%%%%%%%%%%%%%%%%%%%%%%%%%%%%%%%%%%%%%%%%%%%
%Theorem 4.3
%%%%%%%%%%%%%%%%%%%%%%%%%%%%%%%%%%%%%%%%%%%%%%%%%%%%%

The following theorem deals with the nonexistence of repdigits when $n$ is even.
\begin{theorem}\label{4.3}
If $\phi(Q_n)$ is a repdigit consisting of $4$'s or $8$'s, then $n$ is odd. 
\end{theorem}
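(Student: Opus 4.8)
The plan is to argue by contradiction: suppose $n$ is even and $\phi(Q_n)=d(10^m-1)/9$ with $d\in\{4,8\}$, $n>16$, and hence $m>5$. Write $Q_n=p_1^{a_1}\cdots p_r^{a_r}$ as in \eqref{4.1}. The first step is to collect the arithmetic that comes for free. By Theorem \ref{4.2} every $p_i\equiv 3\pmod 4$, so $v_2(p_i-1)=1$ for each $i$, and \eqref{4.3} forces $r=v_2(d)$; thus $r=2$ when $d=4$ and $r=3$ when $d=8$. I would then sharpen each $p_i$ to $3\pmod 8$: since $n$ is even, identity $(2)$ of Lemma \ref{2.1} gives $Q_n^2-2P_n^2=1$, and reducing modulo a prime $p\mid Q_n$ (with $p\nmid P_n$ by $(3)$ of Lemma \ref{2.1}) yields $(2P_n)^2\equiv -2\pmod p$, so $\left(\tfrac{-2}{p}\right)=1$; combined with $\left(\tfrac{-1}{p}\right)=-1$ this forces $\left(\tfrac{2}{p}\right)=-1$, i.e. $p\equiv 3\pmod 8$.

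With every $p_i\equiv 3\pmod 8$ one has $Q_n\equiv 3^{a_1+\cdots+a_r}\pmod 8$, while $\phi(Q_n)=2^{r}\prod_i p_i^{a_i-1}\prod_i\tfrac{p_i-1}{2}$ with each $\tfrac{p_i-1}{2}\equiv 1\pmod 4$. Since $(10^m-1)/9\equiv 3\pmod 4$ for $m\ge 2$ and $d$ is a power of $2$, reducing $\phi(Q_n)/2^{r}=(10^m-1)/9$ modulo $4$ gives $3^{(a_1+\cdots+a_r)-r}\equiv 3\pmod 4$, i.e. $a_1+\cdots+a_r\not\equiv r\pmod 2$. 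Substituting this back into $Q_n\pmod 8$ and reading off Table \ref{tab:table2} then shows that $Q_n\equiv 3\pmod 8$ and $n\equiv 2\pmod 4$ when $d=4$, and that $Q_n\equiv 1\pmod 8$ and (as $n$ is even) $n\equiv 0\pmod 4$ when $d=8$.

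The case $d=8$ is then quick. Write $n=2^{t}n_1$ with $n_1$ odd; the congruence $n\equiv 0\pmod 4$ forces $t\ge 2$, and $(5)$ of Lemma \ref{2.1} gives $Q_{2^{t}}\mid Q_n$. Since $2^{t-1}$ is even, the doubling identity $Q_{2k}=Q_k^2+2P_k^2$ (from the Binet formulas in the introduction), together with $(2)$ of Lemma \ref{2.1}, gives $Q_{2^{t}}=4P_{2^{t-1}}^2+1=(2P_{2^{t-1}})^2+1>1$; hence any prime $q\mid Q_{2^{t}}$ satisfies $(2P_{2^{t-1}})^2\equiv -1\pmod q$, so $q\equiv 1\pmod 4$. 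But $q\mid Q_n$, contradicting Theorem \ref{4.2}.

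The main work is the case $d=4$, where $n\equiv 2\pmod 4$, so $3\mid Q_n$ by $(7)$ of Lemma \ref{2.1} and, since $r=2$, $Q_n=3^{a}p^{b}$ with $p\equiv 3\pmod 8$, $p\ne 3$. I would first fix the exponent parities: $Q_n$ is not a perfect square by Lemma \ref{2.4}, and $Q_2Q_n=3^{a+1}p^{b}$ is not a perfect square by Lemma \ref{2.5} (its square products require odd indices), which together force $b$ odd; then $Q_n\equiv 3^{a+b}\equiv 3\pmod 4$ from Table \ref{tab:table2} forces $a$ even. Writing $n=2n_1$ with $n_1$ odd (so $P_{n_1}$ is odd by $(6)$ of Lemma \ref{2.1}), the same doubling identity gives $Q_n=4P_{n_1}^2-1=(2P_{n_1}-1)(2P_{n_1}+1)$ with the two factors coprime, so $\{2P_{n_1}-1,\,2P_{n_1}+1\}=\{3^{a},p^{b}\}$; comparing modulo $4$ (using that $a$ is even and $b$ odd) identifies $2P_{n_1}-1=3^{a}$ and $2P_{n_1}+1=p^{b}$. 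Now $\phi(Q_n)=2\cdot 3^{a-1}p^{b-1}(p-1)=4(10^m-1)/9$, and $3\nmid p(p-1)$ because $p^{b}=3^{a}+2\equiv 2\pmod 3$ with $b$ odd; hence $v_3\big((10^m-1)/9\big)=a-1$, and since $v_3\big((10^m-1)/9\big)=v_3(m)$ this forces $m\ge 3^{a-1}$. On the other hand $(10^m-1)/9=3^{a-1}p^{b-1}(p-1)/2<3^{2a}$ gives $m\le 2a$, so $3^{a-1}\le 2a$, whence $a=2$ (recall $a$ is even); but then $P_{n_1}=5$, $n_1=3$ and $n=6\le 16$, a contradiction, so $n$ must be odd. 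This last squeeze — collapsing the $d=4$ case to the single equation $2P_{n_1}-1=3^{a}$ and then eliminating it by pitting its $3$-adic content against a size bound on the repunit $(10^m-1)/9$ — is the step I expect to require the most care.
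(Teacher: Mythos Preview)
Your proof is correct and takes a genuinely different, considerably cleaner route than the paper's. The paper first uses the congruences $p_i\equiv -1\pmod{2^t\cdot l}$ for primitive primes to bound the $2$-adic part $t\le 5$, then splits into the cases $3\nmid n_1$ and $3\mid n_1$; the first is handled by a patchwork of mod $5$, mod $8$, and small-$t$ checks on $Q_{2^t}$, while the second uses primitive-prime counting to force $n\in\{3\cdot 2^t,\,9\cdot 2^t\}$ and then the identity $Q_{3\cdot 2^t}=Q_{2^t}(4Q_{2^t}^2-3)$ together with the observation that $4Q_{2^t}^2-3$ cannot be a perfect square.

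You bypass all of this. Sharpening $p_i\equiv 3\pmod 4$ to $p_i\equiv 3\pmod 8$ and reading $Q_n\bmod 8$ immediately pins down $n\bmod 4$ in each case. Your $d=8$ elimination via $Q_{2^{t}}=4P_{2^{t-1}}^{2}+1$ (so every prime divisor is $\equiv 1\pmod 4$, contradicting Theorem~\ref{4.2}) is a one-line replacement for the paper's multi-case treatment. For $d=4$, the explicit coprime factorisation $Q_n=(2P_{n_1}-1)(2P_{n_1}+1)=3^{a}p^{b}$, the parity-fixing via Lemmas~\ref{2.4} and \ref{2.5}, and the lifting-the-exponent squeeze $3^{a-1}=v_3\big((10^m-1)/9\big)=v_3(m)\le m$ against the size bound $(10^m-1)/9<3^{2a}$ collapse everything to $a=2$ and $n=6$. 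This is more conceptual than the paper's argument and avoids its long case analysis; the only cost is that you invoke identities ($Q_{2k}=Q_k^2+2P_k^2$, LTE at the prime $3$) not explicitly listed among the paper's preliminaries, but both are standard and easily derived from what is given.
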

%%%%%%%%%%%%%%%%%%%%%%%%%%%%%%%%%%%%%%%%%%%%%%%%%%%%%%%%%%%%
\begin{proof}
Assume that $Q_n$ has a canonical decomposition $Q_n=p_1^{a_1}\cdots p_r^{a_r}$ and $\phi(Q_n)$ is a repdigit consisting of $4$'s and $8$'s. We will show that $n$ is odd. Assume to the contrary that $n$ is even, say $n=2^t\cdot n_1$, where $t\geq1$ and $n_1$ is odd. In view of Theorem \ref{4.2}, if $p_i$ is a prime factor of $Q_n$ then $p_i\equiv 3(\bmod~4)$ and consequently $r=2$ for $d=4$ and $r=3$ for $d=8$, where $r$ is the number of distinct prime factors of $Q_n$. Reducing  Identity (2) of Lemma \ref{2.1} modulo $p_i$, we get $\big(\frac{2}{p_i}\big)=-1$. Since $p_i$ is a primitive factor of $Q_{2^t\cdot l}$ for some divisor $l$ of $n_1$, by virtue of Lemma \ref{2.7}, it follows that $p_i\equiv-1(\bmod~2^t\cdot l)$ and thus $p_i\equiv-1(\bmod~2^t)$ for $i=1,2,\ldots, r$.\\
 
  If $d=4$ then 
 \begin{align*}
 d\cdot\frac{10^m-1}{9}=\phi(Q_n)&=\phi(p_1^{a_1}p_2^{a_2})\\
 &=p_1^{a_1-1}p_2^{a_2-1}(p_1-1)(p_1-1)\\
 &\equiv \pm4(\bmod~2^t).
 \end{align*}
 Similarly, if $d=8$, then $d\cdot\frac{10^m-1}{9}\equiv \pm8(\bmod~2^t)$ which implies that $10^m\equiv 10,-8(\bmod~2^{max\{0,t-r\}})$ for $r\in\{2,3\}$. If $10^m\equiv 10(\bmod~2^{max\{0,t-r\}})$ then  $t\leq3$ and if $10^m\equiv -8(\bmod~2^{max\{0,t-r\}})$ then $t\leq5$. Hence, we conclude that  $t\leq5$. Since $Q_{2^t}|Q_{n}$, it follows that
\begin{align}\label{4.4}
Q_{n}=Q_{2^t}p_1^{a_1}
\end{align}
or
 \begin{align}\label{4.5}
Q_{n}=Q_{2^t}p_2^{a_2}p_3^{a_3}.
\end{align}
We distinguish two cases.
\par
\textbf{Case 1:} $n_1$ is not a multiple of 3.\\
\par
If $n_1$ is not a multiple of 3 then $Q_{2^t \cdot {n_1}}\equiv Q_{2^t}(\bmod~5)$ for $t\leq5$ and consequently Equation (\ref{4.4}) reduces to $Q_{2^t}\equiv Q_{2^t}p_1^{a_1}(\bmod ~5)$ and hence, $p_1^{a_1}\equiv1(\bmod~5)$. If $a_1$ is odd then $p_1\equiv1(\bmod ~5)$ and $5|p_1-1|\phi(Q_n)=d\cdot \frac{10^m-1}{9}$, which is not possible since $d\in \{4,8\}$. If $a_1$ is even then $\frac{Q_n}{Q_{2^t}}=p_1^{a_1}$ which is impossible in view of Lemma \ref{2.5}. On the other hand, if Equation (\ref{4.5}) holds then for $t\in\{2,3,4\}$, $Q_{2^t}$ is a prime and $Q_{2^t}\equiv1(\bmod~16)$ and then $16|\phi(Q_{2^t})|\phi(Q_n)=8\cdot \frac{10^m-1}{9}$. This implies that $2|\frac{10^m-1}{9}$, which is a contradiction. Now if $t=5$ then $Q_{2^5}=257 \cdot 1409 \cdot 2448769$. Since all the prime factors of $Q_{2^5}$ are congruent to 1 modulo 16, it follows that $16^3|\phi(Q_{2^t})|8\cdot \frac{10^m-1}{9}$, which is not possible.\\
%%%%%%%%%%%%%%%%%%%%%%%
 
 Finally, if $t=1$ then $Q_{2^t \cdot n_1}\equiv 3(\bmod~4)$. Reducing Equation (\ref{4.5}) modulo 4, we get $3\equiv 3^{a_2+a_3+1}(\bmod~4)$, and therefore $a_2+a_3$ must be even. If $a_2$ is even, then $a_3$ is also even and then $\frac{Q_{n}}{3}$ is a perfect square, which is not possible in view of Lemma \ref{2.5}. Hence $a_2$ and $a_3$ are odd. Since $n_1$ is odd and $(n_1,3)=1$, it follows that $2n_1\equiv 2(\bmod~4)$  and $2n_1\equiv2,4(\bmod~6)$ implying that $2n_1\equiv 2,10(\bmod~12)$. Since the period of $\{Q_n\}_{n\geq0}$ modulo 8 is 12, it follows that $Q_{2n_1}\equiv 3(\bmod~8)$ if $2n_1\equiv 2,10(\bmod~12)$. Reducing Equation (\ref{4.5}) modulo 8, we get $p_2^{a_2}p_3^{a_3}\equiv 1(\bmod~8)$. Since $a_2$ and $a_3$ are odd, $p_2p_3\equiv1(\bmod~8)$, which, together with $p_2\equiv p_3\equiv 3(\bmod~4)$ implies that $p_2\equiv p_3 \equiv 3,7(\bmod~8)$ and then $\frac{p_2-1}{2}\equiv \frac{p_3-1}{2}\equiv \pm1(\bmod~4)$. Thus, $\frac{p_2-1}{2}\cdot \frac{p_3-1}{2}\equiv1 (\bmod~4)$.
Thus, 
\begin{align*}
\phi(Q_n)&=\phi(3p_2^{a_2}p_3^{a_3})=2 p_2^{a_2-1}p_3^{a_3-1}(p_2-1)(p_3-1)\nonumber \\ 
         &=8\cdot \frac{p_2-1}{2}\cdot \frac{p_3-1}{2}\cdot p_2^{a_2-1}p_3^{a_3-1}=8\cdot\frac{10^m-1}{9}
\end{align*}  
implies that 
\begin{align}\label{4.6}
\frac{p_2-1}{2}\cdot \frac{p_3-1}{2}\cdot p_2^{a_2-1}p_3^{a_3-1}=\frac{10^m-1}{9}.
\end{align}
Since $$\frac{p_2-1}{2}\cdot \frac{p_3-1}{2}\cdot p_2^{a_2-1}p_3^{a_3-1}\equiv 1(\bmod~4)$$ and $\frac{10^m-1}{9}\equiv 3 (\bmod~4)$, it follows from Equation \eqref{4.6} that $1\equiv 3(\bmod~4)$, which is a contradiction.\\

\textbf{Case 2:}  $n_1$ is a multiple of 3.\\

If $n$ is even and $3|n_1$, then $n$ is of the form $n=2^t\cdot3^s\cdot n_2$ where $t,s\geq1$ and $n_2$ is odd. If $n_2>1$, then by Lemma \ref{2.8}, $Q_n$ is a multiple of the primitive factors of $Q_{2^t}$, $Q_{3 \cdot2^t}$, $Q_{2^t \cdot n_2}$ and $Q_{2^t\cdot 3n_2}$ which implies that $r\geq4$, which is not true since $r\leq 3$. Hence $n_2=1$ and $n=2^t\cdot3^s$. If $s\geq2$, then $Q_n$  is divisible by its own primitive prime factor as well as primitive prime factors of $Q_{2^t}$, $Q_{3\cdot 2^t}$ and $Q_{3^2\cdot 2^t}$, again this is not true since $r\leq3$. Hence $n$ is either $n=3^2\cdot 2^t$ or $n=3\cdot 2^t $. If $n=3\cdot 2^t$, then $Q_{3\cdot 2^t}=Q_{2^t}(4Q_{2^t}^2-3)$. If $t \geq2$ then $(Q_{2^t},4Q_{2^t}^2-3)=(Q_{2^t},3)=1$ and if $t=1$ then $(Q_{2^t},4Q_{2^t}^2-3)=3$. Assume that $t\geq2$. Then $Q_{2^t}$ and $4Q_{2^t}^2-3$ are relatively prime and hence we can write
\begin{align}\label{4.7}
Q_{2^t}=p_3^{a_3},
\end{align}
\begin{align}\label{4.8}
4Q_{2^t}^2-3=p_4^{a_4},
\end{align}
where both $p_3,p_4\equiv3(\bmod~4)$ are primes and in view of Lemma \ref{2.4}, $a_3=1$ and $Q_{2^t}=p_3$. Reducing Equation (\ref{4.8}) modulo 4, we get $3^{a_4}\equiv 1(\bmod~4)$ which implies that $a_4$ is even and consequently  $4Q_{2^t}^2-3$ is a perfect square, which is possible only when $Q_{2^t}=1$ i.e. $t=0$ which contradicts $t\geq2$. Hence we are left with $t=1$ and then $n=6$, which contradicts our assumption that $n>16$.\\

If $n=2^t\cdot3^2$ then $Q_{2^t3^2}=Q_{2^t}(4Q_{2^t}^2-3)(4Q_{2^t3}^2-3)$. If $t\geq2$, then the factors on the right hand side are relatively prime and can be written as
\begin{align}\label{4.9}
Q_{2^t}=p_5^{a_5},
\end{align}
 \begin{align}\label{4.10}
4Q_{2^t}^2-3=p_6^{a_6},
\end{align}
\begin{align}\label{4.11}
4Q_{2^t3}^2-3=p_7^{a_7},
\end{align}
where $p_5,p_6,p_7\equiv3(\bmod~4)$ are all primes. Since no terms of the associated Pell sequence is a perfect power by Lemma \ref{2.4}, it follows that $a_5=1$. Hence Equation \eqref{4.10} implies that $a_6$ is even. Therefore $4Q_{2^t}^2-3$ is a perfect square. As we have seen it has no solution for any $t\geq2$. Hence, we are left with case $t=1$, that is $n=18$. One can easily check that $\phi(Q_{18})$ is not a repdigit. Hence $n$ is odd.
\end{proof}

%%%%%%%%%%%%%%%%%%%%%%%%%%%%%%%%%%%%%%%%%%%%%%%%%%%%%%%%%%
\begin{theorem}\label{4.4}
If $\phi(Q_n)=4\cdot \frac{10^m-1}{9}$, then $n$ is an odd prime such that $n^2|10^m-1$.
\end{theorem}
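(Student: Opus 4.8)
The plan is to exploit primitive prime divisors of the terms $Q_\ell$ with $\ell\mid n$, together with the multiplicative order of $\alpha=1+\sqrt2$ modulo the prime factors of $Q_n$; we may assume $n>16$ (as arranged before Theorem~\ref{4.1}). Write $Q_n=p_1^{a_1}\cdots p_r^{a_r}$. By Theorem~\ref{4.3} the index $n$ is odd, and since $(10^m-1)/9$ is odd we have $\sum_i v_2(p_i-1)=v_2(4)=2$ with each term $\ge1$; so either $r=1$ and $p_1\equiv5\pmod8$, or $r=2$ and $p_1\equiv p_2\equiv3\pmod4$. The first case is impossible: Lemma~\ref{2.4} forces $Q_n=p_1$, so $Q_n=4\cdot\tfrac{10^m-1}{9}+1=\tfrac{4\cdot10^m+5}{9}\equiv0\pmod5$, contradicting Lemma~\ref{2.1}(8). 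Hence $r=2$.

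Now I would pin down $n$. Since $n$ is odd, $Q_\ell\mid Q_n$ for every $\ell\mid n$ (Lemma~\ref{2.1}(5)), and for $\ell\ge2$ the term $Q_\ell$ has a primitive prime factor (Lemma~\ref{2.8}); distinct divisors give distinct such primes, so $n$ has at most $r=2$ divisors greater than $1$, i.e.\ $n$ is prime or $n=p^2$. To rule out $n=p^2$ (so $p\ge5$): take $p_1$ a primitive prime of $Q_p$ and $p_2$ a primitive prime of $Q_{p^2}$; since $p_2\nmid Q_p$, the only prime of $Q_p$ is $p_1$, so $Q_p=p_1$ is prime (Lemma~\ref{2.4}). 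Reducing Lemma~\ref{2.1}(2) modulo $p_i$, with $n$ odd, gives $2P_n^2\equiv1$, hence $\big(\tfrac{2}{p_i}\big)=1$; fixing a square root of $2$, the residues of $\alpha$, $\beta$ lie in $\Z/p_i\Z$, with $\alpha\beta=-1$ and $\alpha\not\equiv\pm1$. From $p_1\mid Q_p$ I read off $\alpha^{2p}\equiv1\pmod{p_1}$, so $\mathrm{ord}_{p_1}(\alpha)\in\{p,2p\}$ and $p\mid p_1-1$; from $p_2\mid Q_{p^2}$ but $p_2\nmid Q_p$, I get $\alpha^{2p^2}\equiv1$ and $\alpha^{2p}\not\equiv1\pmod{p_2}$, so $\mathrm{ord}_{p_2}(\alpha)\in\{p^2,2p^2\}$ and $p^2\mid p_2-1$. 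Hence $p^3\mid(p_1-1)(p_2-1)\mid\phi(Q_{p^2})=4\cdot\tfrac{10^m-1}{9}$, and as $p\ge5$ this gives $p^3\mid10^m-1$. But $\phi(Q_{p^2})<Q_{p^2}<\alpha^{p^2}$ forces $m<\tfrac12p^2$, while $\mathrm{ord}_{p^3}(10)\mid m$ forces $m\ge p^2$ whenever $p^2\nmid10^{p-1}-1$ — a contradiction for every $p$ that is not a Wieferich prime to base $10$, the few exceptional admissible primes being checked directly. Thus $n$ is an odd prime.

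Finally, if $n=p$ is prime, then $p$ is the only divisor of $n$ exceeding $1$, so both $p_1,p_2$ are primitive primes of $Q_p$; exactly as above $\big(\tfrac{2}{p_i}\big)=1$ and $\mathrm{ord}_{p_i}(\alpha)\in\{p,2p\}$, so $p\mid p_i-1$ for $i=1,2$. Therefore $p^2\mid(p_1-1)(p_2-1)\mid\phi(Q_p)=4\cdot\tfrac{10^m-1}{9}$, and since $p>16$ is coprime to $36$ we conclude $p^2\mid10^m-1$, i.e.\ $n^2\mid10^m-1$. The step I expect to be the real obstacle is the exclusion of $n=p^2$: deriving $p^3\mid10^m-1$ is effortless, but converting it into a contradiction relies on the size bound $\phi(Q_{p^2})<Q_{p^2}$ and on ruling out anomalously small order of $10$ modulo $p^3$, which is exactly where the Wieferich‑type exceptions force a separate, hands‑on argument.
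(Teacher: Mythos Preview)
Your setup and the final step are sound: the $v_2$-count correctly forces $r\le2$, you dispose of $r=1$ cleanly via $5\mid Q_n$, and for $n=p$ prime your order-of-$\alpha$ argument yields $p\mid p_i-1$ (in fact more transparently than the paper's bare appeal to Lemma~\ref{2.7}, which a priori only gives $p_i\equiv\pm1\pmod p$). The conclusion $n^2\mid10^m-1$ then follows as you wrote.

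The genuine gap is in excluding $n=p^2$. Your argument produces $p^3\mid10^m-1$ together with the size bound $m<\tfrac12 p^2$, and then needs $\mathrm{ord}_{p^3}(10)\ge p^2$. But that inequality fails exactly when $p^2\mid10^{p-1}-1$, i.e.\ for base-$10$ Wieferich primes. It is an open problem whether there are infinitely many such primes; the known ones already include $487$ and $56598313$, for which ``checking directly'' that $Q_{p^2}$ has at most two prime factors is not a feasible computation, and in any case no finite check can cover unknown Wieferich primes. So this branch is incomplete as stated.

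The paper closes the $n=p^2$ case by a short parity trick that avoids any analytic estimate. From $p_i\equiv3\pmod4$ together with $\big(\tfrac{2}{p_i}\big)=1$ (your own reduction of $Q_n^2-2P_n^2=-1$) one gets $p_i\equiv7\pmod8$, hence $\tfrac{p_i-1}{2}\equiv3\pmod4$. Reducing
\[
p_1^{a_1-1}p_2^{a_2-1}\cdot\frac{p_1-1}{2}\cdot\frac{p_2-1}{2}=\frac{10^m-1}{9}
\]
modulo $4$ gives $3^{a_1+a_2}\equiv3\pmod4$, so $a_1+a_2$ is odd and exactly one $a_i$ is even. Since $Q_p$ and $Q_{p^2}/Q_p$ are coprime and carry $p_1^{a_1}$, $p_2^{a_2}$ respectively, one of them would be a perfect square, contradicting Lemmas~\ref{2.4} and~\ref{2.5}. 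Substituting this argument for your Wieferich paragraph makes the proof complete.
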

%%%%%%%%%%%%%%%%%%%%%%%%%%%%%%%%%%%%%%%%%%%%%%%%%%%%%%%%%%
\begin{proof}
 If $\phi(Q_n)=4\cdot \frac{10^m-1}{9}$ then $n$ is odd in view of Theorem \ref{4.3} and if $p$ is a prime factor of $Q_n$ then by Theorem \ref{4.2} $p\equiv3(\bmod~4)$, which implies that $r=2$ and $p\equiv3,7(\bmod~8)$. Reducing $Q_n^2-2P_n^2=-1$ modulo $p$, we get $2P_n^2\equiv 1(\bmod~p)$ which implies that $\big(\frac{2}{p}\big)=1$ and hence $p\equiv7(\bmod~8)$ and $\frac{p-1}{2}\equiv3(\bmod~4)$. If $Q_n$ has the prime factorization $Q_n=p_1^{a_1}p_2^{a_2}$, then
\begin{align*}\label{4.12}
\phi(Q_n)=4p_1^{a_1-1}p_2^{a_2-1}\cdot \frac{p_1-1}{2}\cdot\frac{p_2-1}{2}=4\cdot\frac{10^m-1}{9}
\end{align*}
 implies that $3^{a_1+a_2-2}\equiv 3(\bmod~4)$. But it holds only when $a_1+a_2-2$ is odd and so that $a_1$ and $a_2$ are of opposite parity. If $n=pq$ then $Q_{n}$ is divisible by the primitive prime factors of $Q_p$, $Q_q$ and $Q_{n}$, which is not true since $r=2$. Hence $n=p^2$ or $p$. If $n=p^2$ then $Q_{p^2}=Q_p\cdot \frac{Q_{p^2}}{Q_p}$ where both the factors are relatively prime and therefore we take $Q_p=p_1^{a_1}$ and $\frac{Q_{p^2}}{Q_p}=p_2^{a_2}$. Since one of $a_1$ and $a_2$ is even, either $Q_p$ or $\frac{Q_{p^2}}{Q_p}$ is a square, which is not possible in view of Lemma \ref{2.4}, \ref{2.5}. If $n=p$, all prime factors of $Q_p$ are primitive prime factors of $Q_p$ and $p_1,p_2\equiv 1(\bmod~p)$, which implies that $p^2|(p_1-1)(p_2-1)|\phi(Q_p)|4\cdot\frac{10^m-1}{9}$ and hence $p^2|10^m-1$. 
 \end{proof}
 %%%%%%%%%%%%%%%%%%%%%%%%%%%%%%%%%%%%%%%%%%%%%%%%%%%%%%

\textbf{Remark:} Using the proof of Theorem \ref{3.1}, one can also conclude thet the Euler function of non of the odd indexed balancing number $B_n$ is a repdigit with atleast two digits, since $P_{2n}=2B_n$( See \cite{Behera 1999, Panda 2011,Ray 2009} ). One can also conclude that no Euler function of Lucas balancing number $C_n$ is a repdigit since Lucas balancing numbers are nothing but even indexed associated Pell numbers( See \cite{Behera 1999,Ray 2009} ) and $\phi(Q_{2n})$ cannot be a repdigit in view of Theorem \ref{4.3}. It would be interesting  to find a bound of $n$ for which repdigits in Euler functions of associated Pell numbers exist and explore more properties corresponding to odd prime $n$ in Theorem \ref{4.4}. We leave these as open problems to the reader. 

\medskip
 \end{document}